\newcommand{\dd}{\partial}
\newcommand{\Rmnum}[1]{\uppercase\expandafter{\romannumeral#1}} % Uppercase roman number
\newcommand{\bbE}{\mathbb{E}}
\newcommand{\bbP}{\mathbb{P}}
\newcommand{\calC}{\mathcal{C}}
\newcommand{\myset}[1]{\left\{#1\right\}}
\newtheorem{mythm}{Theorem}[section]
\newtheorem{myprop}[mythm]{Proposition}
\newtheorem{mylem}[mythm]{Lemma}
\newtheorem{mycor}[mythm]{Corollary}
\newtheorem{myrmk}[mythm]{Remark}
\begin{document}

\title{{\Large{Metrics and Uniform Harnack Inequality on the Strichartz Hexacarpet}}}
\author{Meng Yang}
\date{}

\maketitle

\abstract{We construct \emph{intrinsic} metrics on the Strichartz hexacarpet using weight functions and show that these metrics do \emph{not} satisfy the chain condition. We give uniform Harnack inequality on the approximating graphs of the Strichartz hexacarpet with respect to the intrinsic metrics instead of graph metrics.}

\footnote{\textsl{Date}: \today}
\footnote{\textsl{MSC2010}: 28A80}
\footnote{\textsl{Keywords}: Strichartz hexacarpet, intrinsic metric, uniform Harnack inequality, chain condition}
\footnote{The author was supported by SFB1283 of the German Research Council (DFG). The author was very grateful to Prof. Alexander Grigor'yan, Prof. Alexander Teplyaev, Prof. Jun Kigami and Dr. Qingsong Gu for very helpful discussions.}

\section{Introduction}

A big open question in analysis on fractals is to construct a Brownian motion, or equivalently, a local regular Dirichlet form on any given fractal. This has been done on many fractals, for example, the Sierpi\'nski gasket (SG) \cite{BP88,Kig89} and more general post critically finite (p.c.f.) self-similar sets \cite{Kig93,Kig01,HMT06} and finitely ramified fractals \cite{Pei14}, the Sierpi\'nski carpet (SC) \cite{BB89,KZ92} and higher dimensional SCs \cite{BB99a}. Recently, Grigor'yan and the author \cite{GY19,Yan17} gave a unified purely analytic construction on the SG and the SC.

On p.c.f. self-similar sets and finitely ramified fractals, the most intrinsically essential ingredient in the construction of Brownian motion is the so-called compatible condition. However, on non-p.c.f. self-similar sets and infinitely ramified fractals, compatible condition does not hold and uniform Harnack inequality is a key ingredient which provides compactness results for appropriate approximating sequences. But uniform Harnack inequality is not easy to verify and was obtained only on the SC and higher dimensional SCs.

The main purpose of this paper is to consider another concrete non-p.c.f. self-similar set and infinitely ramified fractal, that is, the Strichartz hexacarpet. The group of Teplyaev \cite{BKNPPT12,KPBT19} has given some results on this fractal mainly on the approximating graphs, but the existence of Brownian motion still remains a conjecture. Since the Strichartz hexacarpet is defined in a very abstract way, there was not even a canonical metric, needless to say uniform Harnack inequality.

In this paper, we construct intrinsic metrics on the Strichartz hexacarpet using weight functions and give uniform Harnack inequality on the approximating graphs of the Strichartz hexacarpet with respect to the intrinsic metrics instead of graph metrics. We will see that the intrinsic metrics behave very different from graph metrics due to the unusual connectedness property of the Strichartz hexacarpet.

The construction of metrics using weight functions was initiated by Kameyama \cite{Kam00} and developed by Kigami \cite{Kig09}. Recently, Gu, Qiu and Ruan \cite{GQR18} constructed metrics on the SC using weight functions with two parameters $a$ and $b$. They showed that the weight functions give metrics if and only if $a,b\in(0,1)$ satisfy $2a+b\ge1$ and $a+2b\ge1$. They showed that the metrics satisfy the chain condition if and only if $2a+b=1$ or $a+2b=1$, that is, the point $(a,b)$ lies on part of the boundary of the admissible region to give metrics.

On the Strichartz hexacarpet, we will construct metrics using weight functions with one parameter $\mu$. We will show that the weight functions give metrics if and only if $\mu\in[1/2,1)$. However, we will show that for all $\mu\in[1/2,1)$, the metrics do not satisfy the chain condition. Hence, unlike the case on the SC, one can not obtain a metric satisfying the chain condition by adjusting the parameter on the Strichartz hexacarpet.

\section{Statement of the Main Results}\label{sec_main}

Let
$$W=\myset{0,1,2,3,4,5}.$$
Let $W_{0}=\myset{\emptyset}$ and
$$W_n=W^n=\myset{w=w_1\ldots w_n:w_i\in W,i=1,\ldots,n}\text{ for all }n\ge1.$$
Let $W_*=\cup_{n=0}^\infty W_n=\cup_{n=0}^\infty W^n$ and
$$W_\infty=W^\infty=\myset{w=w_1w_2\ldots:w_i\in W,i=1,2,\ldots}.$$

For all $n\ge0$, for all $w\in W_n$, denote
$$|w|=n.$$
We use the convention that $|\emptyset|=0$.

For all $n\ge1$, for all $w=w_1\ldots w_{n-1}w_n\in W_n$, denote
$$w^-=w_1\ldots w_{n-1}\in W_{n-1}.$$
For all $w^{(1)}=w^{(1)}_1\ldots w^{(1)}_m\in W_m$ and $w^{(2)}=w^{(2)}_1\ldots w^{(2)}_n\in W_n$, denote
$$w^{(1)}w^{(2)}=w^{(1)}_1\ldots w^{(1)}_mw^{(2)}_1\ldots w^{(2)}_n\in W_{m+n}.$$
For all $w^{(1)}=w^{(1)}_1\ldots w^{(1)}_m\in W_m$ and $w^{(2)}=w^{(2)}_1w^{(2)}_2\ldots\in W_\infty$, denote
$$w^{(1)}w^{(2)}=w^{(1)}_1\ldots w^{(1)}_mw^{(2)}_1w^{(2)}_2\ldots\in W_{\infty}.$$
For all $i\in W$, denote
\begin{align*}
i^n&=\underbrace{i\ldots i}_{n\ \text{times}}\in W_n,\\
i^\infty&=ii\ldots\in W_\infty.
\end{align*}

For all $w^{(1)}=w^{(1)}_1w^{(1)}_2\ldots,w^{(2)}=w^{(2)}_1w^{(2)}_2\ldots\in W_\infty$, define
$$s(w^{(1)},w^{(2)})=\min\myset{i\ge1:w^{(1)}_i\ne w^{(2)}_i},$$
with the convention that $\min\emptyset=+\infty$. It is obvious that
$$s(w^{(1)},w^{(2)})\ge\min{\myset{s(w^{(1)},w^{(3)}),s(w^{(3)},w^{(2)})}}\text{ for all }w^{(1)},w^{(2)},w^{(3)}\in W_\infty.$$

Fix arbitrary $r\in(0,1)$, for all $w^{(1)},w^{(2)}\in W_\infty$, let
$$\delta_r(w^{(1)},w^{(2)})=r^{s(w^{(1)},w^{(2)})},$$
with the convention that $r^{+\infty}=0$. It is obvious that for all $w^{(1)},w^{(2)},w^{(3)}\in W_\infty$, we have
$$\delta_r(w^{(1)},w^{(2)})\le\max\myset{\delta_r(w^{(1)},w^{(3)}),\delta_r(w^{(3)},w^{(2)})}.$$
Hence $\delta_r$ is an ultrametric on $W_\infty$. By \cite[Theorem 1.2.2]{Kig01}, $(W_\infty,\delta_r)$ is a compact metric space.

For all $i\in W$, define $\sigma_i:W_\infty\to W_\infty$ by
$$w=w_1w_2\ldots\mapsto \sigma_i(w)=iw_1w_2\ldots.$$

For all $w\in W_*$, for all $i\in W$, let $j=i+1(\mathrm{mod}\ 6)$, for all
$$v\in\myset{0,5}^\infty=\myset{w=w_1w_2\ldots:w_i=0,5,i=1,2,\ldots}.$$
If $i$ is even, then define
$$wi1v\sim wj1v\text{ and }wi2v\sim wj2v.$$
If $i$ is odd, then define
$$wi3v\sim wj3v\text{ and }wi4v\sim wj4v.$$

It is obvious that $\sim$ is an equivalence relation on $W_\infty$. Let $K=W_\infty\//\sim$ be equipped with the quotient topology and $\pi:W_\infty\to K$ the quotient map. Since at most two elements in $W_\infty$ are mapped to the same point in $K$, a simple topological argument gives that $K$ is a compact Hausdorff space. For all $i\in W$, for all $w^{(1)},w^{(2)}\in W_\infty$, since $w^{(1)}\sim w^{(2)}$ if and only if $\sigma_i(w^{(1)})\sim\sigma_i(w^{(2)})$, there exists a unique map $f_i:K\to K$ such that $\pi\circ\sigma_i=f_i\circ\pi$. Therefore, $K$ is a topological self-similar set, see \cite[Definition 0.3]{Kam00}. By \cite[Theorem 1.5]{Kam00}, $K$ is metrizable. $K$ is called the Strichartz hexacarpet, see \cite[FIGURE 2, FIGURE 4]{BKNPPT12} for related figures.

We use $w\in W_\infty$ also to denote the corresponding point $\pi(w)\in K$.

For all $w=w_1\ldots w_n\in W_*$, let
\begin{align*}
f_w&=f_{w_1}\circ\ldots\circ f_{w_n},\\
K_w&=f_{w_1}\circ\ldots\circ f_{w_n}(K),
\end{align*}
where $f_\emptyset=\mathrm{id}$ is the identity map. We say that $K_w$ is an $n$-cell.

We introduce a pseudo-metric given in \cite{Kam00} as follows.

Let $\mu\in(0,1)$, for all $w\in W_*$, let $g_\mu(w)=\mu^{|w|}$. 

We say that $\myset{w^{(1)},\ldots,w^{(m)}}$ is a chain if $w^{(i)}\in W_*$ for all $i=1,\dots,m$ and $K_{w^{(i)}}\cap K_{w^{(i+1)}}\ne\emptyset$ for all $i=1,\ldots,m-1$. We say that a chain $\myset{w^{(i)},\ldots,w^{(j)}}$ is a sub-chain of $\myset{w^{(1)},\ldots,w^{(m)}}$ for all $i\le j$. Denote $\calC$ as the set of all chains.

We say that $\sum_{i=1}^mg_\mu(w^{(i)})$ is the weight of the chain $\myset{w^{(1)},\ldots,w^{(m)}}$.

For all $x,y\in K$, we say that $\myset{w^{(1)},\ldots,w^{(m)}}$ is a chain connecting $x$ and $y$ if it is a chain satisfying $x\in K_{w^{(1)}}$ and $y\in K_{w^{(m)}}$. Denote $\calC(x,y)$ as the set of all chains connecting $x$ and $y$.

For all $x,y\in K$, let
$$d_\mu(x,y)=\inf\myset{\sum_{i=1}^mg_\mu(w^{(i)}):\myset{w^{(1)},\ldots,w^{(m)}}\in\calC(x,y)}.$$
Then $d_\mu$ is a pseudo-metric by the remark in \cite[Definition 1.10]{Kam00}, that is, $d_\mu(x,y)\ge0$, $d_\mu(x,y)=d_\mu(y,x)$ and $d_\mu(x,y)\le d_\mu(x,z)+d_\mu(z,y)$ for all $x,y,z\in K$.

The main results of this paper are as follows.

\begin{mythm}\label{thm_metric}
$d_\mu$ is a metric if and only if $\mu\in[1/2,1)$. For all $\mu\in[1/2,1)$, for all $i\in W$, for all $x,y\in K$, we have
$$d_\mu(f_i(x),f_i(y))=\mu d_\mu(x,y).$$
For all $w\in W_*$, we have
$$\mathrm{diam}_\mu(K_w):=\sup\myset{d_\mu(x,y):x,y\in K_w}=\mu^{|w|}.$$
The Hausdorff dimension of $(K,d_\mu)$ is $\alpha=-\log6/\log\mu$ and the normalized Hausdorff measure $\nu$ of dimension $\alpha$ exists.
\end{mythm}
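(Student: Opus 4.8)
The plan is to prove the four assertions in four stages: trivial upper bounds; a single localization lemma from which positivity, the scaling identity and the diameter lower bound all follow; the degeneracy for $\mu<1/2$; and finally the Hausdorff computation. First I would record the upper bounds, which need no hypothesis on $\mu$. For any $w\in W_*$ the singleton $\myset{w}$ lies in $\calC(x,y)$ for all $x,y\in K_w$, so $d_\mu(x,y)\le g_\mu(w)=\mu^{|w|}$ and hence $\mathrm{diam}_\mu(K_w)\le\mu^{|w|}$; in particular $d_\mu\le1$. For the easy half of the scaling identity, given $\myset{w^{(1)},\ldots,w^{(m)}}\in\calC(x,y)$ the prepended words $\myset{iw^{(1)},\ldots,iw^{(m)}}$ form a chain in $\calC(f_i(x),f_i(y))$, since $K_{iw}=f_i(K_w)$ and $f_i(K_{w^{(k)}}\cap K_{w^{(k+1)}})\subseteq K_{iw^{(k)}}\cap K_{iw^{(k+1)}}$; its weight is $\mu$ times the original, giving $d_\mu(f_i(x),f_i(y))\le\mu\,d_\mu(x,y)$.

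The crux, and what I expect to be the main obstacle, is a localization lemma: for $\mu\in[1/2,1)$ a weight-minimizing (or near-minimizing) chain connecting two points of a cell $K_w$ may be taken to use only subcells of $K_w$, i.e. words having $w$ as a prefix. Granting it, the theorem unwinds quickly. Stripping the leading letter $i$ from a localized near-optimal chain for $f_i(x),f_i(y)$ produces a chain for $x,y$ of $1/\mu$ times the weight, which gives the reverse inequality $d_\mu(f_i(x),f_i(y))\ge\mu\,d_\mu(x,y)$ and hence the scaling identity; iterating it yields $\mathrm{diam}_\mu(K_w)=\mu^{|w|}\mathrm{diam}_\mu(K)$, and the same localization forces $d_\mu(x,y)>0$ whenever $x\ne y$ (separate $x$ and $y$ into non-adjacent cells at a finite level $n$; a localized chain cannot then join them below a fixed multiple of $\mu^n$). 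To prove the lemma I would argue by a weight-nonincreasing retraction: any excursion of a chain outside $K_w$ is pushed back onto the boundary subcells of $K_w$ without raising the weight. The arithmetic that makes this work is precisely $2\mu\ge1$: crossing one cell forces passage through at least two subcells one level down, so replacing a cell of weight $\mu^n$ by a crossing of subcells costs at least $2\mu^{n+1}=(2\mu)\mu^n\ge\mu^n$. Carrying this out rigorously is where the hexagonal adjacency of $K$ and the explicit gluing along $\myset{0,5}^\infty$ enter, and it is the hardest part.

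For the converse direction of the first assertion I would use the same comparison in reverse. When $2\mu<1$, refining a crossing lowers its weight, so I would exhibit two distinct points lying on a ``thin'' self-similar channel of $K$ (for instance approaching a gluing point of $\sim$) that at each level $n$ can be joined by a chain of $O(2^n)$ cells; such a chain has weight $O((2\mu)^n)\to0$, whence $d_\mu(x,y)=0$ and $d_\mu$ is only a pseudo-metric. This shows $d_\mu$ is a metric only if $\mu\ge1/2$.

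Finally, for the Hausdorff statements I would note $\mu^\alpha=1/6$ from $\alpha=-\log6/\log\mu$. The cover of $K$ by the $6^n$ cells of level $n$, each of $d_\mu$-diameter $\le\mu^n$, gives $\sum_{|w|=n}(\mathrm{diam}_\mu K_w)^\alpha\le6^n\mu^{n\alpha}=1$, so $\calH^\alpha(K)\le1<\infty$ and $\dim_H(K,d_\mu)\le\alpha$. For the lower bound let $\nu_0$ be the self-similar measure with $\nu_0(K_w)=6^{-|w|}$. The localization lemma supplies both the matching diameter bound $\mathrm{diam}_\mu(K_w)=\mu^{|w|}$ and a bounded-overlap property: a ball of radius $r\asymp\mu^n$ meets only boundedly many $n$-cells, each of mass $6^{-n}=\mu^{n\alpha}\asymp r^\alpha$. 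Hence $\nu_0(B(x,r))\le Cr^\alpha$, and the mass distribution principle gives $\calH^\alpha(K)\ge c\,\nu_0(K)>0$. Thus $0<\calH^\alpha(K)<\infty$, so $\dim_H(K,d_\mu)=\alpha$ and the normalized Hausdorff measure $\nu=\calH^\alpha/\calH^\alpha(K)$ exists.
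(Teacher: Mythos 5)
Your route coincides with the paper's in all essentials: the trivial upper bounds from singleton chains; a localization lemma asserting that chains between points of $K_w$ may be taken inside $K_w$, proved by pushing excursions back (the paper does this by \emph{reflecting} the excursion across $K_w\cap K_v$, which is where the symmetric gluing along $\myset{0,5}^\infty$ is used); the crossing estimate driven by the arithmetic $\mu^{k-1}\le2\mu^k$ (the paper's Lemma \ref{lem_metric_pre1}, proved by induction on the maximal word length in the chain); the recursive $2^n$-cell chains of weight $(2\mu)^n\to0$ along the gluing locus $010^\infty\sim110^\infty$ for $\mu<1/2$; and a covering plus mass-distribution argument for the Hausdorff statement (the paper delegates this to Kigami's Theorem 1.5.7). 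One organizational caveat: positivity and the $\mu^n$-separation of disjoint $n$-cells do not follow from the localization lemma itself but from the crossing estimate; since that estimate is exactly the arithmetic you embed inside your retraction argument, this is a matter of ordering the lemmas rather than a missing idea, but you should state the crossing bound as a separate lemma so it can be invoked independently.

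There is, however, one genuine gap: the exact equality $\mathrm{diam}_\mu(K_w)=\mu^{|w|}$. Your tools give $\mathrm{diam}_\mu(K)\le1$ and, via the crossing estimate applied to the disjoint first-level cells, only $\mathrm{diam}_\mu(K)\ge\mu$ (one intermediate cell must be crossed, at cost $\mu$); combined with the scaling identity this yields $\mu^{|w|+1}\le\mathrm{diam}_\mu(K_w)\le\mu^{|w|}$, which is enough for the dimension computation but not for the stated equality, and at $\mu=1/2$ the deficit is a genuine factor of $2$. The missing observation is that the six first-level cells are adjacent only cyclically, so any chain joining $x\in K_0$ to $y\in K_3$ must either contain or pass through \emph{two} disjoint intermediate cells --- both $K_1$ and $K_2$, or both $K_4$ and $K_5$ --- and since sub-chains through disjoint cells contribute disjoint sets of weights, the total is at least $2\mu\ge1$. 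This is how the paper pins down $\mathrm{diam}_\mu(K)=1$, after which self-similarity gives the claimed value for every $K_w$.
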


\begin{myrmk}
If $\mu\in[1/2,1)$, then by \cite[Proposition 1.11]{Kam00}, $d_\mu$ is compatible with the topology of $K$. Hence $(K,d_\mu)$ is a compact metric space. For all $x\in K$, for all $r\in(0,1)$, denote
$$B_\mu(x,r)=\myset{y\in K:d_\mu(x,y)<r}.$$
\end{myrmk}

\begin{myprop}\label{prop_chain}
For all $\mu\in[1/2,1)$, $d_\mu$ does \emph{not} satisfy the chain condition, for all $\theta\in(0,-\log\mu/\log2)$, $d_\mu$ does satisfy the $\theta$-chain condition.
\end{myprop}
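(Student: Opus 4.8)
The plan is to reformulate both conditions so that the combinatorial ring structure of the hexacarpet becomes the only thing to analyze. For $x,y\in K$ and $\veps>0$, let $N(x,y,\veps)$ be the least $N$ for which there is a sequence $x=z_0,z_1,\ldots,z_N=y$ with $d_\mu(z_{k-1},z_k)\le\veps$ for every $k$. A routine reformulation shows that the chain condition is equivalent to a uniform bound $N(x,y,\veps)\le C\,d_\mu(x,y)/\veps$, and the $\theta$-chain condition to $N(x,y,\veps)\le C(d_\mu(x,y)/\veps)^{1/\theta}$. By the scaling relation $d_\mu(f_w(x),f_w(y))=\mu^{|w|}d_\mu(x,y)$ and $\mathrm{diam}_\mu(K_w)=\mu^{|w|}$ from Theorem~\ref{thm_metric}, it suffices to estimate $N$ for pairs with $d_\mu(x,y)\asymp1$ and $\veps=\mu^m$, where the problem reduces to counting the cells of level $m$ needed to join $x$ to $y$.

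The key structural input is what I would call the \emph{no-center} property. First I would read off from the gluing relation that, for every $w$, the six sub-cells $K_{w0},\ldots,K_{w5}$ meet exactly in a $6$-cycle, i.e.\ $K_{wi}\cap K_{wj}\ne\emptyset$ if and only if $i-j\equiv\pm1\ (\mathrm{mod}\ 6)$, and that no point lies in three of them (at most two codes per point). Thus each cell is an annular ring of six sub-cells with no central point, and this repeats at every scale. I would then locate the junctions: $K_{wi}\cap K_{w,i+1}$ sits in the sub-cells of $K_{wi}$ indexed by $\myset{1,2}$ or $\myset{3,4}$ (according to the parity of $i$), with tails in $\myset{0,5}^\infty$, i.e.\ at the $\myset{0,5}$-corners. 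Consequently the incoming and outgoing junctions of a ring-cell lie on non-adjacent sides, so any chain crossing a ring-cell must traverse at least two of its sub-cells.

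For the upper bound I would build chains recursively. To join two points at distance $\asymp\mu^\ell$ with steps $\le\mu^m$, descend level by level, replacing the crossing of each ring-cell by the crossing of two sub-cells one level finer; this uses at most $\asymp 2^{m-\ell}$ cells of level $m$. Since $d_\mu/\veps\asymp(1/\mu)^{m-\ell}$ and $2^{m-\ell}=\bigl((1/\mu)^{m-\ell}\bigr)^{\log 2/\log(1/\mu)}=(d_\mu/\veps)^{1/\theta^{\ast}}$ with $\theta^{\ast}=-\log\mu/\log 2$, this yields $N(x,y,\veps)\lesssim(d_\mu/\veps)^{1/\theta^{\ast}}$, hence the $\theta$-chain condition for every $\theta\in(0,\theta^{\ast})$. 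The requirement $\mu\ge1/2$ (so that $2\mu\ge1$) is exactly what prevents the refinement weight from collapsing and keeps the construction valid.

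The failure of the chain condition is the main obstacle, and the lower bound is its technical heart. For $\mu\in(1/2,1)$ I would take $x,y$ antipodal in the ring, say deep inside $K_0$ and $K_3$; the no-center property forces every chain to go around, and a separating-set (cut) argument giving a matching lower estimate on $d_\mu$ shows that any chain with steps $\le\mu^m$ has at least $\asymp2^{m-\ell}$ terms. Since $2^{m-\ell}\gg(1/\mu)^{m-\ell}=d_\mu/\veps$ precisely when $2\mu>1$, one gets $N\gg d_\mu/\veps$ and the chain condition fails. The critical case $\mu=1/2$, where $\theta^{\ast}=1$, is the delicate endpoint: the clean power bound is only an equivalence up to constants, so I would have to sharpen the crossing count to detect the sub-polynomial correction forced by the $\myset{0,5}$-corner recursion of the junctions, producing $N(x,y,\veps)\ge c\,(d_\mu/\veps)\log(d_\mu/\veps)$ for a suitable pair and thereby excluding the chain condition at $\mu=1/2$ as well. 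The crux throughout is proving that the nested rings admit no shortcuts, i.e.\ a genuine lower bound on $d_\mu$ that tracks the Cantor-set positions of the junctions through the recursion, with the critical case $\mu=1/2$ demanding control to within logarithmic factors.
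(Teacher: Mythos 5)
Your skeleton matches the paper's: reduce both conditions to counting level-$m$ cells along a chain joining $x$ and $y$, use the separation estimate (the paper's Corollary 3.6, that disjoint $n$-cells are at $d_\mu$-distance at least $\mu^n$) to convert an $\veps$-chain into a path in the level-$m$ cell-adjacency graph, and compare the resulting cell count with $\mu^{-m}$. The genuine gap sits exactly where you place the ``technical heart'': your crossing count $N\asymp 2^{m-\ell}$ is wrong, and the error is fatal at the endpoint $\mu=1/2$. The decisive fact, which is the paper's key Lemma 4.2, is that the diameter of the level-$n$ cell-adjacency graph $G_n$ is $\asymp n\cdot 2^n$, \emph{not} $\asymp 2^n$: the recursion is not pure doubling but $\mathrm{diam}(G_{n+1})\ge 2\,\mathrm{diam}(G_n)+c2^n$, and unrolling this produces the extra factor $n$; the matching upper bound $\lesssim n\cdot 2^n$ comes from the outer circumference path of length $3n\cdot 2^n$ computed in \cite{BKNPPT12}. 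Your proposal is internally inconsistent on this point: the upper bound $N\lesssim 2^{m-\ell}=(d_\mu(x,y)/\veps)^{1/\theta^{\ast}}$ that you assert would, at $\mu=1/2$ where $\theta^{\ast}=1$, \emph{prove} the chain condition, while two paragraphs later you state that the correct lower bound there is $N\gtrsim (d_\mu(x,y)/\veps)\log(d_\mu(x,y)/\veps)$. Only one of these can hold, and it is the latter; your recursive construction ``replace the crossing of each ring-cell by the crossing of two sub-cells one level finer'' undercounts.

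Concretely, the logarithmic improvement you defer at $\mu=1/2$ is not a refinement to be added later but the entire content of that case, and you have named the right quantity without proving it. For $\mu\in(1/2,1)$ your argument does go through, since pure doubling already gives $\mathrm{diam}(G_n)\gtrsim 2^n\gg\mu^{-n}$; and for the positive half the lost factor is harmless, because $\theta<\theta^{\ast}$ is strict, so a chain of $\asymp k\cdot 2^k$ cells with steps $\lesssim\mu^k d_\mu(x,y)$ still satisfies the $\theta$-chain inequality once $(2^\theta\mu)^{-k}$ absorbs $k^\theta$ --- this is exactly how the paper closes that half. The fix is therefore to replace your doubling count by the two-sided estimate $\mathrm{diam}(G_n)\asymp n\cdot 2^n$ (at minimum, its lower-bound half via the recursion $\mathrm{diam}(G_{n+1})\ge 2\,\mathrm{diam}(G_n)+c2^n$), after which both assertions of the proposition follow along the lines you sketch.
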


Let $V_0=\myset{ij^\infty:i\in W,j=0,5}$ and
$$V_{n+1}=\bigcup_{i\in W}\sigma_i(V_n)=\myset{wij^\infty:w\in W_{n+1},i\in W,j=0,5}\text{ for all }n\ge0.$$
For all $n\ge0$, let $H_n$ be the graph with vertex set $V_n$ and edge set given by
$$\myset{(w^{(1)},w^{(2)}):w^{(1)}=wv^{(1)},w^{(2)}=wv^{(2)},w\in W_n,v^{(1)},v^{(2)}\in V_0,v^{(1)}\ne v^{(2)}}.$$
There are two metrics on $H_n$, one is the usual graph metric, the other is the metric induced from the intrinsic metric $d_\mu$ on $K$.

\begin{mythm}\label{thm_Harnack}
For all $\mu\in[1/2,1)$, there exists some positive constant $C$ such that for all $x\in K$, for all $r\in(0,1)$, for all non-negative harmonic function $u$ in $V_n\cap B_\mu(x,r)$, we have
$$\max_{V_n\cap B_\mu(x,\mu r)}u\le C\min_{V_n\cap B_\mu(x,\mu r)}u.$$
\end{mythm}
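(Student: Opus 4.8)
The plan is to establish a uniform (i.e. scale- and location-independent) Harnack inequality on the approximating graphs by exploiting the exact self-similarity recorded in Theorem~\ref{thm_metric}, namely $d_\mu(f_i(x),f_i(y))=\mu d_\mu(x,y)$ and $\mathrm{diam}_\mu(K_w)=\mu^{|w|}$. The guiding idea is that, up to the controlled distortion of the intrinsic metric $d_\mu$, every ball $B_\mu(x,r)$ looks — after rescaling by an appropriate cell map $f_w$ — like a fixed finite configuration, so that a Harnack inequality with a \emph{single} constant $C$ can be reduced to a Harnack inequality on finitely many "model" neighborhoods in the combinatorial graphs $H_n$.

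First I would set up the reduction to a fixed scale. Given $x\in K$ and $r\in(0,1)$, choose the integer $k$ with $\mu^{k+1}\le r<\mu^k$, so that $r$ and $\mu^k$ are comparable. Because $\mathrm{diam}_\mu(K_w)=\mu^{|w|}$, the ball $B_\mu(x,r)$ is contained in a bounded number of $k$-cells $K_w$ with $|w|=k$ (the bound depending only on $\mu$, via the metric's doubling-type behavior, which itself follows from the diameter formula and the Ahlfors regularity implicit in the existence of the $\alpha$-dimensional measure $\nu$). The self-similarity contraction $f_w\colon (K,d_\mu)\to (K_w,d_\mu)$ scales distances by exactly $\mu^{|w|}$ and maps $V_0$-type vertices to the relevant vertices of $V_{n}$; harmonicity on $H_n$ is preserved under this combinatorial rescaling because the edge structure of $H_n$ is defined self-similarly by prefixing words $w\in W_n$. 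Hence, pulling $u$ back by $f_w^{-1}$ on each cell, I would reduce the claim to a uniform bound on the harmonic-function values across a \emph{finite, fixed} family of "junction" configurations — the neighborhoods of the gluing points coming from the equivalence relation $\sim$ that identifies boundary words $wi\ell v\sim wj\ell v$.

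Next I would prove the local Harnack estimate on each model configuration. The essential analytic input is an \emph{elliptic Harnack inequality on finite graphs}: on a fixed finite connected graph, any non-negative harmonic function (with respect to the graph Laplacian / the natural random walk) satisfies $\max u\le C\min u$ with $C$ depending only on the graph. Since the reduction in the previous step produces only finitely many combinatorial types of neighborhood — determined by how cells of level $k$ abut along the edges of $H_n$ and across the identifications prescribed by $\sim$ — I would take $C$ to be the maximum of the finitely many constants arising from these model graphs, together with the combinatorial multiplicity bound from the covering by $k$-cells. Chaining the pointwise Harnack comparisons along a path in $V_n\cap B_\mu(x,r)$ that stays within $O(1)$ cells (using that $B_\mu(x,\mu r)$ sits comfortably inside $B_\mu(x,r)$, so harmonicity is available on a definite "collar") yields the two-sided bound on $B_\mu(x,\mu r)$.

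The hard part will be controlling the geometry of balls in the intrinsic metric $d_\mu$, precisely because Proposition~\ref{prop_chain} tells us $d_\mu$ does \emph{not} satisfy the chain condition: a $d_\mu$-ball need \emph{not} be connected in the naive cell-to-cell sense, and its intersection with $V_n$ can spread across cells that are $d_\mu$-close but combinatorially far in the graph metric of $H_n$. The delicate estimate is therefore showing that $B_\mu(x,r)$ still meets only boundedly many $k$-cells and that the relevant vertices of $V_n$ inside $B_\mu(x,\mu r)$ can be connected by harmonic-chaining paths whose length is bounded independently of $n$, $x$, and $r$. I expect this to hinge on the $\theta$-chain condition from Proposition~\ref{prop_chain} (valid for $\theta<-\log\mu/\log 2$), which provides a quantitative substitute for connectivity: it guarantees that nearby points can be joined by chains whose weight is controlled by a power of the distance, thereby bounding the number of harmonic comparisons needed and keeping the final constant $C$ uniform.
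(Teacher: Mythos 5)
There is a genuine gap at the heart of your argument, in the step where you reduce to ``a finite, fixed family of model configurations'' and invoke the elliptic Harnack inequality on a fixed finite graph. After you rescale a ball of radius comparable to $\mu^k$ down to a single cell (or a bounded union of cells) of level $k$, the harmonic function $u$ is still harmonic with respect to the level-$n$ graph $H_n$, and $n-k$ can be arbitrarily large. So the rescaled picture is not one of finitely many finite graphs: it is the whole infinite family $\myset{H_m}_{m\ge0}$ restricted to a fixed cell pattern. The trivial finite-graph Harnack inequality gives a constant depending on the graph, and there is no a priori reason these constants stay bounded as $m\to\infty$ --- indeed, proving that they do is precisely the content of the theorem (the paper's introduction stresses that this uniformity is the hard point and was previously known only for Sierpi\'nski carpets). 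Your proposal therefore assumes the conclusion at the critical step. The same issue undermines the ``chaining'' step: each comparison in the chain would carry a constant that must be shown to be independent of $n$.

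The paper closes this gap with the Barlow--Bass coupling technique: it proves scale-independent lower bounds on exit-distribution probabilities for the simple random walk $X^{(k)}$ via reflection symmetry (the corner move of Lemma \ref{lem_corner} and the knight moves of Lemmas \ref{lem_knight1} and \ref{lem_knight2}, each giving a probability $\ge 1/8$ or $\ge 1/12$ \emph{uniformly in the graph level}), concatenates at most $41$ such moves to show that the walk started in an inner cell $K_{w53}$ hits any path from $y$ to $\dd K_w$ before exiting, with probability $\ge 12^{-41}$ (Proposition \ref{prop_hit}), and then runs a martingale/stopping-time argument to convert this hitting estimate into the oscillation bound for harmonic functions. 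The reduction from balls to cells that you sketch in your first step is essentially the paper's Proposition \ref{prop_cellandball} and is fine; your appeal to the $\theta$-chain condition is not needed there. But without a uniform-in-$n$ quantitative input such as the knight-move hitting estimates, the argument does not go through.
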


\begin{myrmk}
The harmonicity is defined using graphs. The balls are defined using the intrinsic metric instead of graph metrics.
\end{myrmk}

This paper is organized as follows. In Section \ref{sec_metric}, we prove Theorem \ref{thm_metric}. In Section \ref{sec_chain}, we prove Proposition \ref{prop_chain}. In Section \ref{sec_Harnack}, we prove Theorem \ref{thm_Harnack}.

NOTATION. The letters $c,C$ will always refer to some positive constants and may change at each occurrence. The sign $\asymp$ means that the ratio of the two sides is bounded from above and below by positive constants. The sign $\lesssim$ ($\gtrsim$) means that the LHS is bounded by positive constant times the RHS from above (below).

\section{Proof of Theorem \ref{thm_metric}}\label{sec_metric}

We consider the case $\mu\in(0,1/2)$ as follows.

\begin{mylem}\label{lem_notmetric}
For all $\mu\in(0,1/2)$, we have $d_\mu(0^\infty,10^\infty)=0$, hence $d_\mu$ is not a metric.
\end{mylem}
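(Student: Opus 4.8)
The plan is to derive a self-referential inequality for the single number
$D:=d_\mu(0^\infty,10^\infty)$ that forces $D=0$ once $\mu<1/2$, and then to check separately that $0^\infty$ and $10^\infty$ really are \emph{distinct} points of $K$, so that $d_\mu$ violates positivity.

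First I would record a general \emph{contraction} estimate valid for every $\mu\in(0,1)$:
$$d_\mu(f_i(x),f_i(y))\le\mu\, d_\mu(x,y)\quad\text{for all }i\in W,\ x,y\in K.$$
Only this one-sided bound is needed here (the equality in Theorem \ref{thm_metric} is asserted only for $\mu\ge1/2$). To prove it, take any chain $\myset{w^{(1)},\ldots,w^{(m)}}\in\calC(x,y)$ and prefix every word by $i$. Since $K_{iw}=f_i(K_w)$, we have $f_i(x)\in K_{iw^{(1)}}$, $f_i(y)\in K_{iw^{(m)}}$, and $f_i(K_{w^{(k)}}\cap K_{w^{(k+1)}})\subseteq K_{iw^{(k)}}\cap K_{iw^{(k+1)}}$ shows consecutive prefixed cells still meet; thus $\myset{iw^{(1)},\ldots,iw^{(m)}}\in\calC(f_i(x),f_i(y))$, and its weight is exactly $\mu\sum_k g_\mu(w^{(k)})$. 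Taking the infimum gives the claim.

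The crux is choosing a well-placed gluing point. Because $0$ is even and $v=0^\infty\in\myset{0,5}^\infty$, the defining relation $wi1v\sim wj1v$ with $w=\emptyset$, $i=0$, $j=1$ yields $010^\infty\sim 110^\infty$, so these two words represent the \emph{same} point of $K$. Writing $010^\infty=f_0(10^\infty)$, $0^\infty=f_0(0^\infty)$, $110^\infty=f_1(10^\infty)$, $10^\infty=f_1(0^\infty)$, the triangle inequality together with the contraction bound gives
$$D\le d_\mu(0^\infty,010^\infty)+d_\mu(110^\infty,10^\infty)=d_\mu(f_0(0^\infty),f_0(10^\infty))+d_\mu(f_1(10^\infty),f_1(0^\infty))\le 2\mu D,$$
the intermediate term $d_\mu(010^\infty,110^\infty)$ vanishing since the two points coincide. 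The chain $\myset{0,1}$ lies in $\calC(0^\infty,10^\infty)$ (its cells meet along the seam $01v\sim 11v$), so $D\le 2\mu<\infty$; hence $D\le 2\mu D$ with $\mu<1/2$ forces $D=0$.

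Finally I would verify $0^\infty\neq 10^\infty$ in $K$. The relation $\sim$ nontrivially identifies a word only when it has the shape (prefix)$\,i\,\ell\,v$ with $\ell\in\myset{1,2,3,4}$ of parity matching $i$ and $v\in\myset{0,5}^\infty$. The word $0^\infty$ contains no digit in $\myset{1,2,3,4}$, so its class is a singleton; the word $10^\infty$ has its only such digit (a $1$) in the first coordinate, which can neither play the role of $\ell$ (nothing precedes it) nor of $i$ (that would require a $3$ or $4$ in the second coordinate, but it is $0$), so its class is also a singleton. Thus $\pi(0^\infty)\neq\pi(10^\infty)$, and $d_\mu$ assigns distance $0$ to two distinct points, so it is not a metric. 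The only genuinely delicate step is spotting the gluing point $010^\infty=110^\infty$ that makes the self-referential estimate close up; the rest is bookkeeping with the definition of $d_\mu$.
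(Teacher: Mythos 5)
Your proof is correct, but it takes a genuinely different route from the paper. The paper argues by an explicit recursive construction: for each $n$ it builds a chain of $2^n$ level-$n$ cells joining $0^\infty$ to $10^\infty$ (a snake through $K_0$ followed by its mirror image through $K_1$, glued at the identification $010^\infty\sim110^\infty$), which gives $d_\mu(0^\infty,10^\infty)\le(2\mu)^n\to0$. You instead prove the one-sided contraction bound $d_\mu(f_i(x),f_i(y))\le\mu\,d_\mu(x,y)$ by prefixing chains, and combine it with the same gluing point $010^\infty=110^\infty$ and the triangle inequality for the pseudo-metric to get the self-referential estimate $D\le2\mu D$ with $D=d_\mu(0^\infty,10^\infty)$ finite, forcing $D=0$ when $\mu<1/2$. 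The two arguments are two faces of the same phenomenon --- unwinding your inequality $n$ times essentially reproduces the paper's chain --- but yours dispenses with the induction and with checking that consecutive cells in the recursive chain intersect, at the cost of first establishing the contraction lemma (which the paper only proves later, as an equality, for $\mu\ge1/2$; your one-sided version for all $\mu\in(0,1)$ is what is needed and is correctly justified). You also correctly verify that $\pi(0^\infty)\ne\pi(10^\infty)$, a point the paper asserts without proof; your observation that neither word matches the pattern $wi\ell v$ of the generating relations, so both equivalence classes are singletons, is the right argument. No gaps.
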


\begin{proof}
For all $n\ge1$, we construct a chain $\myset{w^{(n,1)},\ldots,w^{(n,2^n)}}\subseteq W_n$ connecting $0^\infty$ and $10^\infty$ as follows. For $n=1$, let $w^{(1,1)}=0$ and $w^{(1,2)}=1$.

Assume that we have constructed a chain $\myset{w^{(n,1)},\ldots,w^{(n,2^n)}}\subseteq W_n$ connecting $0^\infty$ and $10^\infty$. Then for $n+1$, for all $i=1,\ldots,2^n$, let
$$w^{(n+1,i)}=0w^{(n,i)},w^{(n+1,2^n+i)}=1w^{(n,2^n+1-i)}.$$

The following facts are obvious from the above construction.
\begin{itemize}
\item For all $n\ge1$, we have $\myset{w^{(n,1)},\ldots,w^{(n,2^n)}}\subseteq W_n$.
\item For all $n\ge1$, we have $w^{(n,1)}=0^n$ and $w^{(n,2^n)}=10^{n-1}$, hence $0^\infty\in K_{w^{(n,1)}}$ and $10^\infty\in K_{w^{(n,2^n)}}$.
\item $w^{(n+1,2^n)}=010^{n-1}$ and $w^{(n+1,2^n+1)}=110^{n-1}$.
\end{itemize}

To show that $\myset{w^{(n+1,1)},\ldots,w^{(n+1,2^{n+1})}}\in\calC(0^\infty,10^\infty)$, we only need to show that $K_{w^{(n+1,2^n)}}\cap K_{w^{(n+1,2^n+1)}}\ne\emptyset$, that is, $K_{010^{n-1}}\cap K_{110^{n-1}}\ne\emptyset$.

Indeed, $010^\infty\in K_{010^{n-1}}$ and $110^\infty\in K_{110^{n-1}}$. By definition, we have $010^\infty\sim110^\infty$, that is, $010^\infty$ and $110^\infty$ are indeed the same point in $K$, hence $K_{010^{n-1}}\cap K_{110^{n-1}}\ne\emptyset$.

By induction principle, we obtain a chain $\myset{w^{(n,1)},\ldots,w^{(n,2^n)}}\subseteq W_n$ connecting $0^\infty$ and $10^\infty$ for all $n\ge1$. Hence $d_\mu(0^\infty,10^\infty)\le2^n\mu^n\to0$ as $n\to+\infty$, hence $d_\mu(0^\infty,10^\infty)=0$. Since $0^\infty$ and $10^\infty$ are distinct points in $K$, we have $d_\mu$ is not a metric.
\end{proof}

We assume that $\mu\in[1/2,1)$ hereafter. We need do some preparations as follows.

We say that a chain $\myset{w^{(1)},\ldots,w^{(m)}}$ satisfies only adjacent intersection (OAI) condition if the following conditions are satisfied.
\begin{itemize}
\item There exists no $|i-j|\ge2$ such that $K_{w^{(i)}}\cap K_{w^{(j)}}\ne\emptyset$.
\item There exists no $i\ne j$ such that $K_{w^{(i)}}\subseteq K_{w^{(j)}}$.
\end{itemize}

\begin{mylem}\label{lem_OAI}
$$d_\mu(x,y)=\inf\myset{\sum_{i=1}^mg_\mu(w^{(i)}):\myset{w^{(1)},\ldots,w^{(m)}}\in\calC(x,y)\text{ satisfies (OAI) condition}}.$$
\end{mylem}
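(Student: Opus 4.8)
The plan is to show that any chain in $\calC(x,y)$ can be reduced to one satisfying the (OAI) condition without increasing its weight, so that restricting the infimum to (OAI) chains does not change its value. Since every (OAI) chain is in particular a chain in $\calC(x,y)$, the infimum over the smaller class is automatically $\ge d_\mu(x,y)$; the content is the reverse inequality, and for that it suffices to take an arbitrary chain $\myset{w^{(1)},\ldots,w^{(m)}}\in\calC(x,y)$ and produce from it an (OAI) chain of weight no larger. I would phrase the whole argument as a removal procedure that strips out the two forbidden configurations one at a time, arguing that each removal step strictly decreases the number of cells in the chain while keeping it a valid chain connecting $x$ and $y$ and keeping the weight non-increasing; since the cell count is a nonnegative integer, the procedure terminates at an (OAI) chain.

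First I would address containment. Suppose $K_{w^{(i)}}\subseteq K_{w^{(j)}}$ for some $i\ne j$; this forces $|w^{(j)}|\le|w^{(i)}|$, hence $g_\mu(w^{(j)})=\mu^{|w^{(j)}|}\ge\mu^{|w^{(i)}|}=g_\mu(w^{(i)})$ because $\mu\in[1/2,1)\subseteq(0,1)$ makes $g_\mu$ decreasing in length. The idea is then to delete $w^{(i)}$ from the chain: after deletion the remaining cells are re-indexed, and the neighbors of $w^{(i)}$ must be rejoined. Since $K_{w^{(i)}}\subseteq K_{w^{(j)}}$, any cell meeting $w^{(i)}$ already meets (or can be routed through) $w^{(j)}$, so consecutiveness can be restored; the endpoints are preserved because if the deleted cell was an endpoint, $w^{(j)}$ contains it and can serve as the new endpoint. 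The weight drops by $g_\mu(w^{(i)})\ge0$, so it does not increase.

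Next I would remove long-range intersections. If $K_{w^{(i)}}\cap K_{w^{(j)}}\ne\emptyset$ with $j\ge i+2$, then $\myset{w^{(1)},\ldots,w^{(i)},w^{(j)},\ldots,w^{(m)}}$ is again a chain connecting $x$ and $y$ (the new adjacency $K_{w^{(i)}}\cap K_{w^{(j)}}\ne\emptyset$ holds by hypothesis), and it has strictly fewer cells because the block $w^{(i+1)},\ldots,w^{(j-1)}$ is discarded; its weight is the old weight minus the nonnegative quantity $\sum_{k=i+1}^{j-1}g_\mu(w^{(k)})$, hence is no larger. I expect the main obstacle to be purely bookkeeping: one must argue that alternating these two reductions actually terminates and that after no containment and no long-range intersection remain simultaneously, rather than a reduction of one type reintroducing the other. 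The clean way around this is to run the reductions in a fixed order and observe that each individual step \emph{strictly} decreases $m$, the number of cells, while never increasing the weight; since $m$ is a positive integer bounded below by $1$, only finitely many reduction steps can occur, and the terminal chain satisfies both (OAI) bullet points by construction. Taking the infimum over the resulting (OAI) chains then yields a value $\le d_\mu(x,y)$, and combined with the trivial $\ge$ inequality this proves the claimed identity.
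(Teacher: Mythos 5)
Your overall strategy is the same as the paper's: reduce an arbitrary chain in $\calC(x,y)$ to an (OAI) chain by repeatedly excising the two forbidden configurations, observe that each excision strictly decreases the number of cells (so the procedure terminates) while never increasing the weight, and combine with the trivial inequality in the other direction. Your treatment of long-range intersections ($K_{w^{(i)}}\cap K_{w^{(j)}}\ne\emptyset$ with $j\ge i+2$: discard the block strictly between $i$ and $j$) is exactly the paper's move and is correct, as is your explicit termination argument via the decreasing cell count.

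The containment step, however, has a genuine gap as written. You propose to delete the single cell $w^{(i)}$ when $K_{w^{(i)}}\subseteq K_{w^{(j)}}$ and assert that the weight drops by $g_\mu(w^{(i)})$. But after deleting $w^{(i)}$ alone, $K_{w^{(i-1)}}$ and $K_{w^{(i+1)}}$ need not intersect, so the result need not be a chain; and your fallback of ``routing through $w^{(j)}$'' cannot mean inserting a copy of $w^{(j)}$ at position $i$, because $|w^{(j)}|\le|w^{(i)}|$ gives $g_\mu(w^{(j)})\ge g_\mu(w^{(i)})$, so that replacement does not decrease the weight and your accounting of the drop would be wrong. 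Similarly, if $i=1$ the new first cell $K_{w^{(2)}}$ need not contain $x$. The correct excision --- the one the paper performs --- deletes the whole block from position $i$ up to but not including position $j$ when $i<j$ (and from $j+1$ through $i$ when $j<i$), so that the already-present $w^{(j)}$ becomes the new neighbour of $w^{(i-1)}$ (resp.\ of $w^{(i+1)}$): connectivity holds because $K_{w^{(i-1)}}\cap K_{w^{(j)}}\supseteq K_{w^{(i-1)}}\cap K_{w^{(i)}}\ne\emptyset$, the endpoint is preserved because $x\in K_{w^{(i)}}\subseteq K_{w^{(j)}}$ when $i=1$, and the weight strictly decreases because at least the term $g_\mu(w^{(i)})$ is removed and nothing is added. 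With that repair your argument coincides with the paper's proof.
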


\begin{proof}
It is obvious that the LHS $\le$ the RHS.

Assume that $\myset{w^{(1)},\ldots,w^{(m)}}\in\calC(x,y)$.

If there exist $i+2\le j$ such that $K_{w^{(i)}}\cap K_{w^{(j)}}\ne\emptyset$, then
$$\myset{w^{(1)},\ldots,w^{(i-1)},w^{(i)},w^{(j)},w^{(j+1)},\ldots,w^{(m)}}\in\calC(x,y)$$
and
$$\sum_{k=1}^{i}g_\mu(w^{(k)})+\sum_{k=j}^{m}g_\mu(w^{(k)})<\sum_{k=1}^mg_\mu(w^{(k)}).$$

If there exist $j+2\le i$ such that $K_{w^{(j)}}\cap K_{w^{(i)}}\ne\emptyset$, then
$$\myset{w^{(1)},\ldots,w^{(j-1)},w^{(j)},w^{(i)},w^{(i+1)},\ldots,w^{(m)}}\in\calC(x,y)$$
and
$$\sum_{k=1}^{j}g_\mu(w^{(k)})+\sum_{k=i}^{m}g_\mu(w^{(k)})<\sum_{k=1}^mg_\mu(w^{(k)}).$$

If there exist $i<j$ such that $K_{w^{(i)}}\subseteq K_{w^{(j)}}$, then
$$\myset{w^{(1)},\ldots,w^{(i-1)},w^{(j)},\ldots,w^{(m)}}\in\calC(x,y)$$
and
$$\sum_{k=1}^{i-1}g_\mu(w^{(k)})+\sum_{k=j}^{m}g_\mu(w^{(k)})<\sum_{k=1}^mg_\mu(w^{(k)}).$$

If there exist $j<i$ such that $K_{w^{(i)}}\subseteq K_{w^{(j)}}$, then
$$\myset{w^{(1)},\ldots,w^{(j)},w^{(i+1)},\ldots,w^{(m)}}\in\calC(x,y)$$
and
$$\sum_{k=1}^{j}g_\mu(w^{(k)})+\sum_{k=i+1}^{m}g_\mu(w^{(k)})<\sum_{k=1}^mg_\mu(w^{(k)}).$$

Repeating the above procedure finitely many times, we eventually obtain a chain still in $\calC(x,y)$ satisfying (OAI) condition with less weight than the origin chain. Hence the RHS $\le$ the LHS.

Therefore, we obtain the desired result.
\end{proof}

For all $w\in W_*$, the boundary $\dd K_w$ is given by
$$\dd K_w=\myset{wiv:i\in W,v\in\myset{0,5}^\infty},$$
the interior $\mathrm{int}(K_w)$ is given by
$$\mathrm{int}(K_w)=K_w\backslash\dd K_w.$$

We collect some basic facts as follows.

\begin{mylem}\label{lem_metric_basic}
\hspace{0em}
\begin{enumerate}[(1)]
\item For all $w\in W_*$, $\dd K_w$ is the \emph{disjoint} union of $\dd K_w\cap K_{w0}$, \ldots, $\dd K_w\cap K_{w5}$, that is,
$$\dd K_w=\coprod_{i\in W}\left(\dd K_w\cap K_{wi}\right),$$
where for all $i\in W$,
$$\dd K_w\cap K_{wi}=\myset{wiv:v\in\myset{0,5}^\infty}.$$
\item For all $n\ge1$, for all $w\in W_n$, there exist at most \emph{three} elements $v\in W_n$ with $v\ne w$ such that $K_v\cap K_w\ne\emptyset$. More precisely, there exist \emph{two} elements $v\in W_n$ with $v\ne w$ and $v^-=w^-$ such that $K_v\cap K_w\ne\emptyset$ and there exists at most \emph{one} element $v\in W_n$ with $v\ne w$ and $v^-\ne w^-$ such that $K_v\cap K_w\ne\emptyset$.
\end{enumerate}
\end{mylem}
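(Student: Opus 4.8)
The plan is to reduce both statements to a direct inspection of the basic gluing relations generating $\sim$, using the fact noted above that every equivalence class contains at most two words. Throughout I will exploit that a basic gluing changes a symbol at some position $p$ (replacing $a$ by $a+1\pmod 6$) only when the symbol at position $p+1$ lies in $\myset{1,2,3,4}$ and all later symbols lie in $\myset{0,5}$.

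For part (1), the inclusion $\myset{wiv:v\in\myset{0,5}^\infty}\subseteq\dd K_w\cap K_{wi}$ is immediate, since any such word has a representative beginning with $wi$ and lies in $\dd K_w$ by definition. For the reverse inclusion and the disjointness, I would take a point of $\dd K_w\cap K_{wi}$; as a boundary point it is represented by some $wi'v$ with $v\in\myset{0,5}^\infty$, and as a point of $K_{wi}$ it is represented by some $wiu$. The key observation is that \emph{every} word equivalent to $wi'v$ carries the same symbol $i'$ at position $|w|+1$: every symbol of $wi'v$ from position $|w|+2$ onwards lies in $\myset{0,5}$, so no basic gluing can act at position $|w|+1$, while gluings acting at positions $\le|w|$ leave position $|w|+1$ untouched. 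Hence $i=i'$, which simultaneously forces the point to equal $wiv$ and shows the cells $K_{wi}$ meet $\dd K_w$ in pairwise disjoint pieces.

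For part (2), I would first reduce $K_v\cap K_w\ne\emptyset$ with $v\ne w$ and $|v|=|w|=n$ to the statement that $\myset{vu_1,wu_2}$ is one of the basic glued pairs for some tails $u_1,u_2\in W_\infty$; this uses that two equal-length words representing the same point cannot be literally equal (else $v=w$) and so must be glued nontrivially, while classes have size at most two. Any basic pair differs in \emph{exactly one} coordinate, say position $p$, where the two entries are consecutive modulo $6$, the entry at position $p+1$ lies in $\myset{1,2,3,4}$, and all entries from position $p+2$ onwards lie in $\myset{0,5}$. I then split according to whether $p=n$ or $p<n$. If $p=n$, the tails are free so the gluing is always realizable; here $v$ and $w$ differ only at the last coordinate, where $v_n$ must be one of the two neighbours $w_n\pm1\pmod 6$ of $w_n$, giving \emph{exactly two} such $v$, all with $v^-=w^-$. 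If $p<n$, then $v^-\ne w^-$, the neighbour $v$ is uniquely determined by $p$ (its $p$-th entry is the unique partner of $w_p$ dictated by $k=w_{p+1}$), and existence requires $w_{p+1}\in\myset{1,2,3,4}$ together with $w_{p+2},\ldots,w_n\in\myset{0,5}$.

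The main point, and the step that makes the count come out to \emph{at most one} neighbour with $v^-\ne w^-$, is the incompatibility of two distinct interior positions: if $p_1<p_2\le n-1$ both produced neighbours, then the tail condition for $p_1$ would force $w_{p_2+1}\in\myset{0,5}$, contradicting the requirement $w_{p_2+1}\in\myset{1,2,3,4}$ coming from $p_2$. This rules out more than one interior gluing, so the total is the two siblings plus at most one, i.e.\ at most three. I expect the bookkeeping of the parity condition (even $i$ pairing with $k\in\myset{1,2}$, odd $i$ with $k\in\myset{3,4}$) to be the only delicate part, but it is routine once one checks that the pairs $\myset{i,i+1\bmod 6}$ exhaust all six consecutive pairs, so each symbol has exactly two pairing partners and, once $k$ is fixed, a unique one.
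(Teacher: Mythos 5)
Your argument is correct. The paper gives no proof of this lemma at all (it is introduced with ``we collect some basic facts''), so there is nothing to diverge from: your direct inspection of the gluing rules is precisely the verification the author leaves implicit. The two key points --- that no identification can act at position $|w|+1$ of a word whose tail lies in $\myset{0,5}^\infty$, and that two interior gluing positions $p_1<p_2\le n-1$ are incompatible because the tail condition at $p_1$ forces $w_{p_2+1}\in\myset{0,5}$ while $p_2$ requires $w_{p_2+1}\in\myset{1,2,3,4}$ --- are exactly right, and the parity bookkeeping you defer does close: an even symbol pairs with its successor when followed by $1$ or $2$ and with its predecessor when followed by $3$ or $4$ (reversed for odd symbols), so the partner at a given position is uniquely determined once the following symbol is fixed, and both partners at the last position are realizable by a suitable choice of tail.
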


For all $w\in W_*$, we say that $\myset{w^{(1)},\ldots,w^{(m)}}$ is a chain going through $K_w$ if it is a chain satisfying $K_{w^{(i)}}\subseteq K_w$ for all $i=1,\ldots,m$, $K_{w^{(1)}}\cap\dd K_w\ne\emptyset$ and $K_{w^{(m)}}\cap\dd K_w\ne\emptyset$. Denote $\calC(K_w)$ as the set of all chains going through $K_w$. Moreover, if there exist $j_1,j_2\in W$ with $j_1\ne j_2$ such that $\emptyset\ne\dd K_w\cap K_{w^{(1)}}\subseteq\dd K_w\cap K_{wj_1}$ and $\emptyset\ne\dd K_w\cap K_{w^{(m)}}\subseteq\dd K_w\cap K_{wj_2}$, then we say that $\myset{w^{(1)},\ldots,w^{(m)}}$ is a chain going through $K_w$ with different entries, denoted as $\myset{w^{(1)},\ldots,w^{(m)}}\in\calC(K_w)$ with different entries, it is obvious that $|w^{(i)}|\ge|w|+1$ for all $i=1,\ldots,m$.

\begin{mylem}\label{lem_metric_pre1}
For all $w\in W_*$, for all $\myset{w^{(1)},\ldots,w^{(m)}}\in\calC(K_w)$ with different entries, we have
$$\sum_{i=1}^mg_\mu(w^{(i)})\ge\mu^{|w|}.$$
\end{mylem}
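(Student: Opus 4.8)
The plan is to first reduce to the case $w=\emptyset$ by self-similarity, and then to establish the normalized inequality $\sum_{i=1}^mg_\mu(u^{(i)})\ge1$ for every chain going through $K$ with different entries, by induction on the finest level of the chain. For the reduction, I would use that $K_{w^{(i)}}\subseteq K_w$ together with $|w^{(i)}|\ge|w|+1$ forces each $w^{(i)}$ to be a sub-cell $w^{(i)}=wu^{(i)}$ of $K_w$. Since every $f_i$, hence $f_w$, is injective and $\dd K_w=f_w(\dd K)$ with $\dd K_w\cap K_{wj}=f_w(\dd K\cap K_j)$, applying $f_w^{-1}$ turns $\myset{w^{(1)},\ldots,w^{(m)}}$ into a chain $\myset{u^{(1)},\ldots,u^{(m)}}$ going through $K$ with the same different entries $j_1\ne j_2$. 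Because $g_\mu(w^{(i)})=\mu^{|w|}g_\mu(u^{(i)})$, the target bound $\sum_ig_\mu(w^{(i)})\ge\mu^{|w|}$ is then exactly $\sum_ig_\mu(u^{(i)})\ge1$.

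For the case $w=\emptyset$, I would first set up the geometric picture. Writing $a_i$ for the first letter of $u^{(i)}$, we have $K_{u^{(i)}}\subseteq K_{a_i}$, so $a_1,\ldots,a_m$ is a walk on the level-$1$ cells, which by Lemma \ref{lem_metric_basic}(2) form a $6$-cycle with $K_i$ adjacent precisely to $K_{i\pm1(\mathrm{mod}\ 6)}$. Since the pieces $\dd K\cap K_i$ are disjoint, the nonempty inclusion $\dd K\cap K_{u^{(1)}}\subseteq\dd K\cap K_{j_1}$ forces $a_1=j_1$, and likewise $a_m=j_2$. The key computation is that, inside $K_{j_1}$, the outer boundary is $\dd K\cap K_{j_1}=\dd K_{j_1}\cap(K_{j_1 0}\cup K_{j_1 5})$, corresponding to sub-cell entries $\myset{0,5}$, whereas each interface satisfies $K_{j_1}\cap K_{j_1\pm1}=\dd K_{j_1}\cap(K_{j_1 c_1}\cup K_{j_1 c_2})$ with $\myset{c_1,c_2}$ equal to $\myset{1,2}$ or $\myset{3,4}$ according to the parity of $j_1$; crucially these interface entries are disjoint from $\myset{0,5}$.

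The induction would be on $N=\max_i|u^{(i)}|$. Let $\myset{u^{(1)},\ldots,u^{(p)}}$ be the initial run with $a_i=j_1$ and $\myset{u^{(q)},\ldots,u^{(m)}}$ the final run with $a_i=j_2$; since the first letters $j_1,j_2$ differ, these runs occupy disjoint index blocks, so $p<q$. The initial run starts at the outer boundary of $K_{j_1}$ (entry in $\myset{0,5}$) and ends at the interface $K_{j_1}\cap K_{a_{p+1}}$ (entry in $\myset{1,2}$ or $\myset{3,4}$), hence is a chain going through $K_{j_1}$ with different entries; stripping the leading $j_1$ produces a chain through $K$ with different entries of finest level $\le N-1$, so the induction hypothesis gives this run weight $\ge\mu$ (and if the run already contains the whole cell $K_{j_1}$ its weight is $\ge g_\mu(j_1)=\mu$ outright). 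The same bound holds for the final run in $K_{j_2}$, and adding the two disjoint contributions yields $\sum_ig_\mu(u^{(i)})\ge2\mu\ge1$, where the last step is exactly $\mu\ge1/2$; the base case $N=1$ is the instance in which each run is a single cell.

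The step I expect to be the main obstacle is the boundary bookkeeping underlying the second paragraph: one must verify from the explicit identification rules, together with Lemma \ref{lem_metric_basic}, that the first and last cells of each run meet $\dd K_{j_1}$ inside a \emph{single} sub-cell piece and that the outer-boundary entries $\myset{0,5}$ are disjoint from every interface entry, so that each run genuinely qualifies as a chain ``with different entries'' to which the induction applies. Once this parity-dependent bookkeeping is settled, the disjointness of the two runs and the factor $2\mu\ge1$ are routine.
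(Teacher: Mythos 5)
Your argument is correct, but it is a genuinely different induction from the paper's. The paper also inducts on $k=\max_i|w^{(i)}|$, but its reduction step is a \emph{coarsening} at the finest level: after normalizing to the (OAI) condition, any level-$k$ cell at the start, end, or interior of the chain sits in a maximal run of at least two siblings sharing a parent (by Lemma \ref{lem_metric_basic}(2)), and replacing that run by the common parent costs $\mu^{k-1}\le 2\mu^k\le j\mu^k$; iterating drives $k$ down to the base case $k\le|w|+2$, which is checked directly. Your reduction step instead \emph{decomposes} the chain into the maximal entry run in $K_{j_1}$ and exit run in $K_{j_2}$, recognizes each as a rescaled chain-with-different-entries one level down, and applies $2\mu\ge1$ once at the top; both proofs use $\mu\ge1/2$ in exactly the same arithmetic form. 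The boundary bookkeeping you flag as the main obstacle does go through: points of $\dd K\cap K_{j_1}$ have unique addresses with second letter in $\myset{0,5}$, while points of $K_{j_1}\cap K_{j_1\pm1(\mathrm{mod}\ 6)}$ have (within $K_{j_1}$) unique addresses with second letter in $\myset{1,2}$ or $\myset{3,4}$, so the first and last cells of each run (each contained in a single level-$2$ cell once you exclude the trivial case $u^{(i)}=j_1$) meet $\dd K_{j_1}$ in distinct pieces, and in particular each run has at least two cells. What your route buys is transparency: the self-similar rescaling is explicit, and you avoid both the (OAI) normalization and the ``repeat finitely many times'' loop of the paper. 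What the paper's route buys is that merging siblings into a parent requires no analysis of where the chain enters and exits the subcells, which is precisely the delicate part of your argument.
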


\begin{proof}
Denote $n=|w|$. By the proof of Lemma \ref{lem_OAI}, we may assume that $\myset{w^{(1)},\ldots,w^{(m)}}$ satisfies (OAI) condition. Let $j_1,j_2\in W$ with $j_1\ne j_2$ satisfy $\emptyset\ne\dd K_w\cap K_{w^{(1)}}\subseteq\dd K_w\cap K_{wj_1}$ and $\emptyset\ne\dd K_w\cap K_{w^{(m)}}\subseteq\dd K_w\cap K_{wj_2}$.

Let
$$k=\max\myset{|w^{(i)}|:i=1,\ldots,m}.$$

If $k=n+1$ or $k=n+2$, then direct calculation gives the desired result. Assume that this result holds for $n+1,n+2,\ldots,k-1$. For $k>n+2$, we only need to find some $\myset{v^{(1)},\ldots,v^{(l)}}\in\calC(K_w)$ with different entries satisfying $\emptyset\ne\dd K_w\cap K_{v^{(1)}}\subseteq\dd K_w\cap K_{wj_1}$ and $\emptyset\ne\dd K_w\cap K_{v^{(l)}}\subseteq\dd K_w\cap K_{wj_2}$ and
$$\max\myset{|v^{(i)}|:i=1,\ldots,l}<k$$
such that
$$\sum_{i=1}^lg_\mu(v^{(i)})\le\sum_{i=1}^mg_\mu(w^{(i)}).$$
Then by induction assumption, we have
$$\sum_{i=1}^mg_\mu(w^{(i)})\ge\sum_{i=1}^lg_\mu(v^{(i)})\ge\mu^n.$$

If $|w^{(1)}|=k$, then $|w^{(2)}|=k$ and $(w^{(1)})^-=(w^{(2)})^-$. Indeed, suppose that $|w^{(2)}|<k$, since $K_{w^{(1)}}\not\subseteq K_{w^{(2)}}$ and $\emptyset\ne\dd K_w\cap K_{w^{(1)}}\subseteq\dd K_w\cap K_{wj_1}$, by Lemma \ref{lem_metric_basic}, we have $K_{w^{(2)}}\cap K_w\subseteq\dd K_w$, contradicting to the fact that $K_{w^{(2)}}\subseteq K_w$. Suppose that $|w^{(2)}|=k$ and $(w^{(1)})^-\ne(w^{(2)})^-$, since $\emptyset\ne\dd K_w\cap K_{w^{(1)}}\subseteq\dd K_w\cap K_{wj_1}$, by Lemma \ref{lem_metric_basic} again, we have $K_{w^{(2)}}\cap K_w\subseteq\dd K_w$, contradicting to the fact that $K_{w^{(2)}}\subseteq K_w$.

Let
$$j=\max\myset{j:|w^{(1)}|=\ldots=|w^{(j)}|,(w^{(1)})^-=\ldots=(w^{(j)})^-},$$
then $j\ge2$. Hence we have $\myset{(w^{(1)})^-,w^{(j+1)},\ldots,w^{(m)}}\in\calC(K_w)$ with different entries satisfying $\emptyset\ne\dd K_w\cap K_{(w^{(1)})^-}\subseteq\dd K_w\cap K_{wj_1}$ and $\emptyset\ne\dd K_w\cap K_{w^{(m)}}\subseteq\dd K_w\cap K_{wj_2}$. Noting that
$$g_\mu((w^{(1)})^-)=\mu^{k-1}\le2\mu^k\le\sum_{i=1}^j\mu^k=\sum_{i=1}^jg_\mu(w^{(i)}),$$
we have
$$g_\mu((w^{(1)})^-)+\sum_{i=j+1}^mg_\mu(w^{(i)})\le\sum_{i=1}^mg_\mu(w^{(i)}).$$
Moreover, we have $|(w^{(1)})^-|=k-1<k$.

If $|w^{(m)}|=k$, then by similar argument to the above, we have another chain going through $K_w$ with different entries and with the last element $(w^{(m)})^-$ satisfying $|(w^{(m)})^-|=k-1<k$.

For a possibly new chain, denoted by $\myset{v^{(1)},\ldots,v^{(l)}}$, that satisfies $|v^{(1)}|<k$ and $|v^{(l)}|<k$. If
$$\max\myset{|v^{(i)}|:i=1,\ldots,l}<k,$$
then this is our desired chain. Otherwise, let
$$j=\min\myset{j:|v^{(j)}|=k}.$$
By similar argument to the above, let
$$p=\max\myset{p:|v^{(j)}|=\ldots=|v^{(p)}|,(v^{(j)})^-=\ldots=(v^{(p)})^-},$$
then $l-1\ge p\ge j+1$. Hence we have $\myset{v^{(1)},\ldots,v^{(j-1)},(v^{(j)})^-,v^{(p+1)},\ldots,v^{(l)}}\in\calC(K_w)$ with different entries satisfying
$$\sum_{i=1}^{j-1}g_\mu(v^{(i)})+g_\mu((v^{(j)})^-)+\sum_{i=p+1}^lg_\mu(v^{(i)})\le\sum_{i=1}^lg_\mu(v^{(i)}).$$

Repeating the above consideration finitely many times, we eventually obtain the desired chain.

By induction principle, we have the desired result.
\end{proof}

\begin{myrmk}
By the above proof, $1/2$ is critically important.
\end{myrmk}

\begin{mycor}\label{cor_metric}
For all $n\ge1$, for all $w^{(1)},w^{(2)}\in W_n$. If $K_{w^{(1)}}\cap K_{w^{(2)}}=\emptyset$, then
$$d_\mu(K_{w^{(1)}},K_{w^{(2)}}):=\inf\myset{d_\mu(x,y):x\in K_{w^{(1)}},y\in K_{w^{(2)}}}\ge\mu^n.$$
\end{mycor}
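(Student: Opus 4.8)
The plan is to bound from below the weight of an arbitrary connecting chain by $\mu^n$. Fix $x\in K_{w^{(1)}}$ and $y\in K_{w^{(2)}}$ and let $\myset{u^{(1)},\ldots,u^{(m)}}\in\calC(x,y)$; by Lemma \ref{lem_OAI} I may assume it satisfies the (OAI) condition, and the goal is $\sum_{i=1}^m g_\mu(u^{(i)})\ge\mu^n$. The argument splits according to the coarsest cell level appearing in the chain.

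First I would dispose of the easy case. If some $u^{(i)}$ has $|u^{(i)}|\le n$, then since $\mu\in(0,1)$ the map $t\mapsto\mu^t$ is decreasing, so $g_\mu(u^{(i)})=\mu^{|u^{(i)}|}\ge\mu^n$ and a fortiori the whole chain has weight at least $\mu^n$. It therefore remains to treat the case $|u^{(i)}|\ge n+1$ for every $i$. Here each $K_{u^{(i)}}$ sits inside the $n$-cell $K_{v_i}$ determined by the length-$n$ prefix $v_i\in W_n$ of $u^{(i)}$, and $K_{v_i}$ meets any other $n$-cell only along $\dd K_{v_i}$.

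In this case I would project the chain to level $n$: since consecutive cells $K_{u^{(i)}}$, $K_{u^{(i+1)}}$ intersect, so do $K_{v_i}$, $K_{v_{i+1}}$, and after deleting consecutive repetitions the $v_i$ form a chain of $n$-cells joining an $n$-cell containing $x$ to one containing $y$. Because $K_{w^{(1)}}\cap K_{w^{(2)}}=\emptyset$, these two endpoint cells cannot coincide, so the projected chain genuinely traverses the gap between $K_{w^{(1)}}$ and $K_{w^{(2)}}$. The heart of the matter is then to single out one intermediate $n$-cell $K_v$ along this projected chain which the original chain crosses from one boundary piece to a different one: reading off the maximal block $u^{(a)},\ldots,u^{(b)}$ of consecutive cells with prefix $v$, the cell $K_{u^{(a)}}$ meets $\dd K_v$ at the junction with the incoming neighbour and $K_{u^{(b)}}$ meets $\dd K_v$ at the junction with the outgoing neighbour, and by Lemma \ref{lem_metric_basic}(1) these junctions lie in $\dd K_v\cap K_{vj_1}$ and $\dd K_v\cap K_{vj_2}$. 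Once $j_1\ne j_2$ is secured, $\myset{u^{(a)},\ldots,u^{(b)}}$ is a chain going through $K_v$ with different entries, so Lemma \ref{lem_metric_pre1} yields $\sum_{i=a}^b g_\mu(u^{(i)})\ge\mu^{|v|}=\mu^n$, whence $\sum_{i=1}^m g_\mu(u^{(i)})\ge\mu^n$; taking the infimum over chains and over $x,y$ gives the claim.

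The main obstacle is precisely the selection of this crossing cell with $j_1\ne j_2$, together with the degenerate configurations in which $x$ or $y$ already lies on a cell boundary, so that the endpoint prefix cell need not equal $w^{(1)}$ or $w^{(2)}$. The projected chain may enter and leave an intermediate $n$-cell through the same neighbour, in which case that cell is not crossed with different entries and must be discarded. I would therefore argue that, because $K_{w^{(1)}}$ and $K_{w^{(2)}}$ are genuinely disjoint rather than merely boundary-touching, at least one intermediate cell must be entered and exited through distinct boundary pieces, using Lemma \ref{lem_metric_basic}(2) to control how adjacent $n$-cells can meet. This reduction of ``separated endpoints force a genuine crossing'' to the hypotheses of Lemma \ref{lem_metric_pre1} is the only delicate point; everything else is the routine monotonicity and projection bookkeeping described above.
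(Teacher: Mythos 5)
Your proposal is correct and follows essentially the same route as the paper: both arguments reduce an arbitrary connecting chain to either containing a cell of level at most $n$ (hence of weight at least $\mu^n$) or containing a sub-chain going through some intermediate $n$-cell with different entries, and then invoke Lemma \ref{lem_metric_pre1}. The paper simply asserts this dichotomy for some $w^{(3)}\in W_n\setminus\{w^{(1)},w^{(2)}\}$ without spelling out the prefix-projection and entry-selection bookkeeping that you describe, so your write-up is if anything more explicit about the one genuinely geometric step.
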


\begin{proof}
For all $x\in K_{w^{(1)}}$, $y\in K_{w^{(2)}}$, for all $\myset{v^{(1)},\ldots,v^{(m)}}\in\calC(x,y)$, there exists $w^{(3)}\in W_n$ with $w^{(3)}\ne w^{(1)}$ and $w^{(3)}\ne w^{(2)}$, either there exists $i=1,\ldots,m$ such that $K_{v^{(i)}}\supseteq K_{w^{(3)}}$ or there exist $i_1\le i_2$ such that $\myset{v^{(i_1)},\ldots,v^{(i_2)}}\in\calC(K_{w^{(3)}})$ with different entries.

For the first case, we have
$$\sum_{i=1}^mg_\mu(v^{(i)})\ge g_\mu(v^{(i)})=\mu^{|v^{(i)}|}\ge\mu^{|w^{(3)}|}=\mu^n.$$

For the second case, by Lemma \ref{lem_metric_pre1}, we have
$$\sum_{i=1}^mg_\mu(v^{(i)})\ge\sum_{i=i_1}^{i_2}g_\mu(v^{(i)})\ge\mu^{|w^{(3)}|}=\mu^n.$$

Hence
$$d_\mu(x,y)\ge\mu^n,$$
hence
$$d_\mu(K_{w^{(1)}},K_{w^{(2)}})\ge\mu^n.$$

\end{proof}

\begin{mylem}\label{lem_metric_pre2}
For all $w\in W_*$, for all $x,y\in K_w$, we have
\begin{footnotesize}
\begin{equation*}
d_\mu(x,y)=\inf\myset{\sum_{i=1}^mg_\mu(w^{(i)}):\myset{w^{(1)},\ldots,w^{(m)}}\in\calC(x,y),K_{w^{(i)}}\subseteq K_w\text{ for all }i=1,\ldots,m}.
\end{equation*}
\end{footnotesize}
\end{mylem}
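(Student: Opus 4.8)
The plan is to prove the nontrivial inequality; the easy direction is immediate. Since every chain that stays inside $K_w$ belongs to $\calC(x,y)$, the infimum on the right-hand side is taken over a \emph{smaller} family of chains, hence $d_\mu(x,y)\le\mathrm{RHS}$. For the reverse inequality it suffices to show that \emph{every} chain $\myset{w^{(1)},\ldots,w^{(m)}}\in\calC(x,y)$ admits a chain $\myset{v^{(1)},\ldots,v^{(l)}}\in\calC(x,y)$ with $K_{v^{(i)}}\subseteq K_w$ for all $i$ and with no larger weight; taking the infimum over the original chains then yields $\mathrm{RHS}\le d_\mu(x,y)$. By Lemma \ref{lem_OAI} I may assume the given chain satisfies the (OAI) condition, and I set $n=|w|$.

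The structural input I would isolate first is the following localization of intersections: if $v\in W_*$ is \emph{incomparable} to $w$ (neither is a prefix of the other) and $K_v\cap K_w\ne\emptyset$, then $K_v\cap K_w\subseteq\dd K_w$. To see this, let $u$ be the longest common prefix of $v$ and $w$ and write $v=ua\ldots$, $w=ub\ldots$ with $a\ne b$; then $K_v\cap K_w\subseteq K_{ua}\cap K_{ub}$, and since each point of $K$ has at most two preimages in $W_\infty$, two distinct children of $K_u$ can meet only at glued points, so $K_{ua}\cap K_{ub}\subseteq\dd K_{ua}\cap\dd K_{ub}$. Finally, a repeated application of Lemma \ref{lem_metric_basic} (1), in the form $\dd K_p\cap K_{pc}\subseteq\dd K_{pc}$ (immediate from the explicit description $\dd K_p\cap K_{pc}=\myset{pcv:v\in\myset{0,5}^\infty}$), propagates the boundary down to $K_w$, giving $K_v\cap K_w\subseteq\dd K_w$. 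This is the fact that makes $K_w$ communicate with the outside world only through $\dd K_w$.

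Now fix an (OAI) chain in $\calC(x,y)$. If some cell $K_{w^{(i)}}$ \emph{contains} $K_w$, i.e.\ $w^{(i)}$ is a prefix of $w$, then its weight is $\mu^{|w^{(i)}|}\ge\mu^n$, so the whole chain has weight $\ge\mu^n$, and the single-cell chain $\myset{w}$ — which lies in $K_w$ and connects $x,y\in K_w$ — is an admissible competitor of weight $\mu^n$, so we are done. Hence we may assume no cell contains $K_w$. Then each $K_{w^{(i)}}$ is either \emph{inside} ($K_{w^{(i)}}\subseteq K_w$) or \emph{incomparable} to $K_w$, and by the localization fact every incomparable cell meets $K_w$ only along $\dd K_w$. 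Splitting the chain into maximal inside-runs and maximal outside-excursions, each excursion is preceded and followed by inside cells, and the intersection points at which it leaves and re-enters lie on $\dd K_w$.

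It remains to replace each outside-excursion, running from a boundary point $a\in\dd K_w$ to a boundary point $b\in\dd K_w$, by a chain lying inside $K_w$ of no greater weight and with matching endpoints $a,b$. This is the heart of the argument and, I expect, the main obstacle, since for the per-chain comparison we need an \emph{exact} weight bound, not merely the lower bounds of Corollary \ref{cor_metric} and Lemma \ref{lem_metric_pre1}. The mechanism I would use is the local reflection symmetry of the hexacarpet across a shared boundary arc: the gluing that attaches $K_w$ to a neighbouring cell along $\dd K_w$ should be realized by an isometry of $(K,d_\mu)$ exchanging the two cells and fixing the shared arc, under which the portion of the excursion inside the neighbour maps to a chain of equal weight inside $K_w$ with the same boundary endpoints. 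The genuinely delicate point is that a single excursion may leave through one neighbour and return through another (recall from Lemma \ref{lem_metric_basic} (2) that $K_w$ has up to three neighbours), so one must decompose the excursion neighbour-by-neighbour and reflect each piece separately, using the localization fact again at each boundary crossing. Once every excursion is replaced, concatenating the resulting inside pieces produces the required competitor $\myset{v^{(1)},\ldots,v^{(l)}}\in\calC(x,y)$ inside $K_w$, completing the proof.
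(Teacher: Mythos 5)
Your overall reduction (easy inclusion, reduction to (OAI) chains, disposal of cells containing $K_w$, the localization of $K_v\cap K_w$ into $\dd K_w$ for incomparable $v$, and reflection across a shared boundary arc) matches the paper's proof. But the step you yourself single out as the delicate one is a genuine gap, and your proposed resolution would fail. Consider an excursion that leaves $K_w$ into a neighbouring level-$|w|$ cell $K_v$ and then passes into a further level-$|w|$ cell $K_u$ with $u\ne w$ (possibly with $K_u\cap K_w=\emptyset$). Reflecting the $K_v$-portion about the arc $K_w\cap K_v$ sends the junction cells sitting in $K_v\cap K_u$ to cells inside $K_w$ which are in general no longer adjacent to the $K_u$-portion, so the piecewise-reflected object is no longer a chain; and for a cell $K_u$ disjoint from $K_w$ there is no reflection into $K_w$ available at all. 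So ``decompose neighbour-by-neighbour and reflect each piece'' does not produce the competitor you need, and insisting on an exact per-chain competitor with matching endpoints is precisely what makes this case intractable.

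The paper escapes by \emph{not} constructing a competitor in that case. Since $\myset{w}$ is itself an admissible chain inside $K_w$ connecting $x$ and $y$, the right-hand side is at most $\mu^{|w|}$. Now if the portion of the chain lying in a neighbour $K_v$ enters and exits through different parts of $\dd K_v$ (the paper's cases (c) and (e) --- exactly the cross-neighbour scenario above), it is a chain going through $K_v$ with different entries, so Lemma \ref{lem_metric_pre1} already gives
$$\sum_{i=i_1}^{i_2}g_\mu(w^{(i)})\ge\mu^{|v|}=\mu^{|w|}\ge\mathrm{RHS},$$
and the desired inequality for the whole chain follows immediately, with no surgery. Reflection is reserved for the remaining cases, where the excursion into a single neighbour $K_v$ enters and leaves (or terminates) on the shared arc $K_w\cap K_v$; there the reflected cells remain glued to the adjacent inside cells and the chain survives. (A smaller point: your claim that every outside-excursion is preceded and followed by inside cells also fails when $x$ or $y$ lies on $\dd K_w$ and the chain begins or ends with an incomparable cell; the paper's cases (a) and (b) cover this.) You should restructure your argument around this dichotomy --- lower-bound by $\mu^{|w|}$ via Lemma \ref{lem_metric_pre1} when the excursion has different entries, reflect only otherwise --- rather than attempting to reflect every excursion.
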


\begin{proof}
If $w=\emptyset$, then this result is trivial. We may assume that $|w|\ge1$. It is obvious that the LHS $\le$ the RHS. Since $\myset{w}\in\calC(x,y)$, we have the RHS $\le\mu^{|w|}$.

We only need to show that for arbitrary $\myset{w^{(1)},\ldots,w^{(m)}}\in\calC(x,y)$, we have
$$\sum_{i=1}^mg_\mu(w^{(i)})\ge\text{RHS}.$$
If there exists $i=1,\ldots,m$ such that $|w^{(i)}|\le|w|$, then
$$\sum_{i=1}^mg_\mu(w^{(i)})\ge g_\mu(w^{(i)})=\mu^{|w^{(i)}|}\ge\mu^{|w|}\ge\text{RHS}.$$
We may assume that $|w^{(i)}|\ge|w|+1$ for all $i=1,\ldots,m$.

If $K_{w^{(i)}}\subseteq K_w$ for all $i=1,\ldots,m$, then it is trivial to have
$$\sum_{i=1}^mg_\mu(w^{(i)})\ge\text{RHS}.$$
Otherwise, there exists $i=1,\ldots,m$ such that $K_{w^{(i)}}\not\subseteq K_w$.

Then there exists $v\in W_{|w|}$ with $v\ne w$ and $K_w\cap K_v\ne\emptyset$, there exist $i_1\le i_2$ such that $K_{w^{(i)}}\subseteq K_v$ for all $i=i_1,\ldots,i_2$ and exact one of the following conditions holds.
\begin{enumerate}[(a)]
\item $i_2=m$.
\item $i_1=1$, $i_2<m$ and $K_{w^{(i_2+1)}}\subseteq K_w$.
\item $i_1=1$, $i_2<m$ and $K_{w^{(i_2+1)}}\subseteq K_u$ for some $u\in W_{|w|}$ with $u\ne w$ and $u\ne v$.
\item $i_1>1$, $K_{w^{(i_1-1)}}\subseteq K_w$, $i_2<m$ and $K_{w^{(i_2+1)}}\subseteq K_w$.
\item $i_1>1$, $K_{w^{(i_1-1)}}\subseteq K_w$, $i_2<m$ and $K_{w^{(i_2+1)}}\subseteq K_u$ for some $u\in W_{|w|}$ with $u\ne w$ and $u\ne v$.
\end{enumerate}

For (c) and (e). We have $\myset{w^{(i_1)},\ldots,w^{(i_2)}}\in\calC(K_v)$ with different entries. By Lemma \ref{lem_metric_pre1}, we have
$$\sum_{i=1}^mg_\mu(w^{(i)})\ge\sum_{i=i_1}^{i_2}g_\mu(w^{(i)})\ge\mu^{|v|}=\mu^{|w|}\ge\text{RHS}.$$

For (a), (b) and (d). By reflection, we replace $w^{(i_1)},\ldots,w^{(i_2)}$ by $v^{(i_1)},\ldots,v^{(i_2)}$ that are symmetric about $K_w\cap K_v$, see Figure \ref{fig_reflection}, then $K_{v^{(i_1)}},\ldots,K_{v^{(i_2)}}\subseteq K_w$ and $g_\mu(v^{(i)})=g_\mu(w^{(i)})$ for all $i=i_1,\ldots,i_2$.

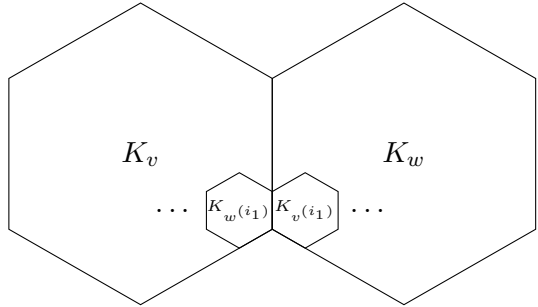
\begin{figure}[ht]
\centering
\begin{tikzpicture}[scale=0.5]
\draw (0,0)--(0,4)--(2*1.73205080756887,6)--(4*1.73205080756887,4)--(4*1.73205080756887,0)--(2*1.73205080756887,-2)--cycle;
\draw (0,0)--(0,4)--(-2*1.73205080756887,6)--(-4*1.73205080756887,4)--(-4*1.73205080756887,0)--(-2*1.73205080756887,-2)--cycle;

\draw (0,0)--(0,1)--(0.5*1.73205080756887,6/4)--(4/4*1.73205080756887,4/4)--(4/4*1.73205080756887,0)--(2/4*1.73205080756887,-2/4)--cycle;
\draw (0,0)--(0,1)--(-0.5*1.73205080756887,6/4)--(-4/4*1.73205080756887,4/4)--(-4/4*1.73205080756887,0)--(-2/4*1.73205080756887,-2/4)--cycle;

\draw (2*1.73205080756887,2) node {{{$K_w$}}};
\draw (-2*1.73205080756887,2) node {{{$K_v$}}};

\draw (2/4*1.73205080756887,2/4) node {{\tiny{$K_{v^{(i_1)}}$}}};
\draw (-2/4*1.73205080756887,2/4) node {{\tiny{$K_{w^{(i_1)}}$}}};

\draw (2/4*1.73205080756887+1.7,2/4) node {\ldots};
\draw (-2/4*1.73205080756887-1.7,2/4) node {\ldots};
\end{tikzpicture}
\caption{The Reflection}\label{fig_reflection}
\end{figure}

Repeat the above consideration to the chain
$$\myset{w^{(1)},\ldots,w^{(i_1-1)},v^{(i_1)},\ldots,v^{(i_2)},w^{(i_2+1)},\ldots,w^{(m)}}$$
finitely many times, exact one of the following cases occurs.
\begin{enumerate}[(i)]
\item We obtain a chain denoted by $\myset{v^{(1)},\ldots,v^{(m)}}\in\calC(x,y)$ with $K_{v^{(i)}}\subseteq K_w$ for all $i=1,\ldots,m$ and $\sum_{i=1}^mg_\mu(v^{(i)})=\sum_{i=1}^mg_\mu(w^{(i)})$.
\item Either (c) or (e) holds.
\end{enumerate}
For (i), we have
$$\sum_{i=1}^mg_\mu(w^{(i)})=\sum_{i=1}^mg_\mu(v^{(i)})\ge\text{RHS}.$$
For (ii), we have
$$\sum_{i=1}^mg_\mu(w^{(i)})\ge\mu^{|w|}\ge\text{RHS}.$$

Hence, we have the LHS $\ge$ the RHS.
\end{proof}

\begin{proof}[Proof of Theorem \ref{thm_metric}]
The case $\mu\in(0,1/2)$ has been considered in Lemma \ref{lem_notmetric}. We may assume that $\mu\in[1/2,1)$.

We only need to show that for arbitrary fixed $x,y\in K$ with $x\ne y$, we have $d_\mu(x,y)>0$.

Since $\pi^{-1}(x)$ contains at most two elements in $W_\infty$ for all $x\in K$, there exist unique $w\in W_*$ and $j_1,j_2\in W$ with $j_1\ne j_2$ such that $x\in K_{wj_1}\backslash K_{wj_2}$ and $y\in K_{wj_2}\backslash K_{wj_1}$.

If $K_{wj_1}\cap K_{wj_2}=\emptyset$, then by Corollary \ref{cor_metric}, we have
$$d_\mu(x,y)\ge d_\mu(K_{wj_1},K_{wj_2})\ge\mu^{|w|+1}>0.$$

If $K_{wj_1}\cap K_{wj_2}\ne\emptyset$, then without lose of generality, we may assume that $j_1=0$ and $j_2=1$, then
$$K_{w0}\cap K_{w1}=\pi\left(\myset{w01v\sim w11v,w02v\sim w12v:v\in\myset{0,5}^\infty}\right),$$
there exist $k^{(1)},k^{(2)}\in W$, $v^{(1)}=v^{(1)}_1v^{(1)}_2\ldots,v^{(2)}=v^{(2)}_1v^{(2)}_2\ldots\in W_\infty$ such that $w0k^{(1)}v^{(1)}\in\pi^{-1}(x)$, $w1k^{(2)}v^{(2)}\in\pi^{-1}(y)$.

If $k^{(1)}\ne k^{(2)}$ or $k^{(1)}\in\myset{0,3,4,5}$ or $k^{(2)}\in\myset{0,3,4,5}$, then for all $\myset{w^{(1)},\ldots,w^{(m)}}\in\calC(x,y)$, either there exists $i=1,\ldots,m$ such that $K_{w^{(i)}}$ contains a $(|w|+2)$-cell or there exists some sub-chain passing through a $(|w|+2)$-cell with different entries, hence
$$\sum_{i=1}^{m}g_\mu(w^{(i)})\ge\mu^{|w|+2},$$
hence
$$d_\mu(x,y)\ge\mu^{|w|+2}>0.$$

Hence we may assume that $k^{(1)}=k^{(2)}\in\myset{1,2}$, without lose of generality, we may assume that $k^{(1)}=k^{(2)}=1$.

Since $x\in K_{w0}\backslash K_{w1}$ and $y\in K_{w1}\backslash K_{w0}$, we have $v^{(1)},v^{(2)}\not\in\myset{0,5}^\infty$. Let
\begin{align*}
n^{(1)}&=\min\myset{n:v^{(1)}_n\not\in\myset{0,5}},\\
n^{(2)}&=\min\myset{n:v^{(2)}_n\not\in\myset{0,5}}.
\end{align*}
For all $\myset{w^{(1)},\ldots,w^{(m)}}\in\calC(x,y)$, for all $j=1,2$, either there exists $i=1,\ldots,m$ such that $K_{w^{(i)}}$ contains a $(|w|+2+n^{(j)})$-cell or there exists some sub-chain passing through a $(|w|+2+n^{(j)})$-cell with different entries, hence
$$\sum_{i=1}^mg_\mu(w^{(i)})\ge\mu^{|w|+2+n^{(1)}}+\mu^{|w|+2+n^{(2)}},$$
hence
$$d_\mu(x,y)\ge\mu^{|w|+2+n^{(1)}}+\mu^{|w|+2+n^{(2)}}>0.$$

Therefore, we have $d_\mu(x,y)>0$ for all $x,y\in K$ with $x\ne y$.

For all $j\in W$, for all $x,y\in K$, we have
\begin{footnotesize}
\begin{align*}
&d_\mu(f_j(x),f_j(y))\\
&=\inf\myset{\sum_{i=1}^mg_\mu(w^{(i)}):\myset{w^{(1)},\ldots,w^{(m)}}\in\calC(f_j(x),f_j(y)),K_{w^{(i)}}\subseteq K_j\text{ for all }i=1,\ldots,m}\\
&=\inf\myset{\sum_{i=1}^mg_\mu(jw^{(i)}):\myset{jw^{(1)},\ldots,jw^{(m)}}\in\calC(f_j(x),f_j(y)),K_{jw^{(i)}}\subseteq K_j\text{ for all }i=1,\ldots,m}\\
&=\mu\inf\myset{\sum_{i=1}^mg_\mu(w^{(i)}):\myset{w^{(1)},\ldots,w^{(m)}}\in\calC(x,y)}=\mu d_\mu(x,y),
\end{align*}
\end{footnotesize}
where we use Lemma \ref{lem_metric_pre2} in the first equality, we use the fact that
$$\myset{jw^{(1)},\ldots,jw^{(m)}}\in\calC(f_j(x),f_j(y))$$
if and only if
$$\myset{w^{(1)},\ldots,w^{(m)}}\in\calC(x,y)$$
in the third equality.

For all $x,y\in K$, since $\myset{\emptyset}\in\calC(x,y)$, we have
$$d_\mu(x,y)\le g_\mu(\emptyset)=1,$$
hence $\mathrm{diam}_\mu(K)\le1$.

For all $x\in K_0$, $y\in K_3$, for all $\myset{w^{(1)},\ldots,w^{(m)}}\in\calC(x,y)$.

Denote
\begin{enumerate}[(a)]
\item Either there exists $i=1,\ldots,m$ such that $K_{w^{(i)}}\supseteq K_1$ or there exist $i_1\le i_2$ such that $\myset{w^{(i_1)},\ldots,w^{(i_2)}}\in\calC(K_1)$ with different entries.
\item Either there exists $i=1,\ldots,m$ such that $K_{w^{(i)}}\supseteq K_2$ or there exist $i_1\le i_2$ such that $\myset{w^{(i_1)},\ldots,w^{(i_2)}}\in\calC(K_2)$ with different entries.
\item Either there exists $i=1,\ldots,m$ such that $K_{w^{(i)}}\supseteq K_4$ or there exist $i_1\le i_2$ such that $\myset{w^{(i_1)},\ldots,w^{(i_2)}}\in\calC(K_4)$ with different entries.
\item Either there exists $i=1,\ldots,m$ such that $K_{w^{(i)}}\supseteq K_5$ or there exist $i_1\le i_2$ such that $\myset{w^{(i_1)},\ldots,w^{(i_2)}}\in\calC(K_5)$ with different entries.
\end{enumerate}
Then either (a) and (b) hold or (c) and (d) hold. In both cases, we have
$$\sum_{i=1}^mg_\mu(w^{(i)})\ge\mu+\mu=2\mu\ge1,$$
hence $d_\mu(x,y)\ge1$, hence $\mathrm{diam}_\mu(K)=1$. By the contraction property of $f_0,\ldots,f_5$, we have $\mathrm{diam}_\mu(K_w)=\mu^{|w|}$.

By Lemma \ref{lem_metric_basic} and Corollary \ref{cor_metric}, we have the conditions in \cite[Theorem 1.5.7]{Kig01} hold, hence the Hausdorff dimension of $(K,d_\mu)$ is $\alpha=-\log6/\log\mu$, the normalized Hausdorff measure $\nu$ of dimension $\alpha$ exists and is given by a self-similar measure.
\end{proof}

\section{Proof of Proposition \ref{prop_chain}}\label{sec_chain}

Recall that a metric space $(K,d)$ satisfies the chain condition or the $\theta$-chain condition if there exists a positive constant $C$ such that for all $x,y\in K$, for all $n\ge1$, there exists a sequence $\myset{x_0,x_1,\ldots,x_n}$ in $K$ with $x_0=x$ and $x_n=y$ such that
$$d(x_i,x_{i+1})\le C\frac{d(x,y)}{n}\text{ for all }i=0,\ldots,n-1,$$
or
\begin{equation}\label{eqn_thetachain}
d(x_i,x_{i+1})\le C\frac{d(x,y)}{n^\theta}\text{ for all }i=0,\ldots,n-1.
\end{equation}

For all $n\ge1$, let $G_n$ be the graph with vertex set $W_n$ and edge set given by
$$\myset{(w^{(1)},w^{(2)}):w^{(1)},w^{(2)}\in W_n,w^{(1)}\ne w^{(2)},K_{w^{(1)}}\cap K_{w^{(2)}}\ne\emptyset}.$$
For all $w^{(1)},w^{(2)}\in W_n$, we denote $w^{(1)}\sim_nw^{(2)}$ if $(w^{(1)},w^{(2)})$ is an edge in $G_n$. Let $d_n$ be the graph metric on $G_n$, that is, $d_n(w^{(1)},w^{(2)})$ is the minimum of the lengths of all paths joining $w^{(1)}$ and $w^{(2)}$. Denote the diameter of $G_n$ as
$$\mathrm{diam}(G_n):=\sup\myset{d_n(w^{(1)},w^{(2)}):w^{(1)},w^{(2)}\in W_n}.$$

\begin{mylem}\label{lem_diam}
There exists some positive constant $C$ such that for all $n\ge1$, we have
$$\frac{1}{C}(n\cdot 2^n)\le\mathrm{diam}(G_n)\le C(n\cdot2^n).$$
\end{mylem}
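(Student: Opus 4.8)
The plan is to exploit the self-similar block decomposition of $G_n$ and reduce the statement to a recursion in $n$. First I would record the structure of $G_n$ furnished by Lemma \ref{lem_metric_basic}: the vertex set $W_n$ splits into six blocks $iW_{n-1}$ ($i\in W$) according to the first letter; since $f_i$ maps $K$ homeomorphically onto $K_i$, the subgraph induced on each block $iW_{n-1}$ is isomorphic to $G_{n-1}$; and the only edges between distinct blocks join cyclically adjacent blocks $i$ and $j=i+1(\mathrm{mod}\ 6)$, through the sparse ``contact ports'' dictated by the identification rules ($i1v,i2v\sim j1v,j2v$ when $i$ is even, and $i3v,i4v\sim j3v,j4v$ when $i$ is odd, with $v\in\{0,5\}^{n-2}$). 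This identifies, inside each block, the two port sets through which a path may enter or leave, and shows each vertex has at most one cross-block neighbour.

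For the upper bound I would route any two vertices through at most three consecutive blocks (half of the six-cycle). The point is that a traversal costs the full block diameter $\mathrm{diam}(G_{n-1})$ only in the two \emph{endpoint} blocks, where one must reach a port from an arbitrary vertex; in each of the $O(1)$ \emph{intermediate} blocks one only needs a port-to-port path, and the key sub-lemma is that such a path can be built with length $O(2^n)$ and \emph{no} factor of $n$, by an explicit chain generalizing the one in Lemma \ref{lem_notmetric} (which already connects $0^\infty$ to $10^\infty$ in $2^n$ steps). This yields a recursion of the shape $\mathrm{diam}(G_n)\le 2\,\mathrm{diam}(G_{n-1})+C2^n$; writing $\mathrm{diam}(G_n)=2^nE_n$ turns it into $E_n\le E_{n-1}+C$, hence $\mathrm{diam}(G_n)\le C'n2^n$. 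Producing the short port-to-port chains and checking that full diameter is paid only twice is the routine but bookkeeping-heavy part.

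For the lower bound I would fix an antipodal pair, say $0^\infty$ and $3^\infty$, and show every connecting path has length $\ge\frac1C n2^n$; this is the main obstacle, because the factor $n$ cannot come from the metric $d_\mu$. Indeed any path $w^{(0)},\ldots,w^{(L)}$ in $G_n$ is a chain whose $d_\mu$-weight is $(L+1)\mu^n$, so $L\ge d_\mu(x,y)\mu^{-n}-1$, and optimizing over $\mu\in[1/2,1)$ (using $d_\mu\le 1$) only gives $L\gtrsim 2^n$; the missing factor of $n$ is exactly the gap responsible for the failure of the chain condition in Proposition \ref{prop_chain}. I would therefore argue combinatorially and multiscale: projecting the path to each level $k=1,\ldots,n$ and using that the contact ports between adjacent level-$k$ cells are sparse and ``shifted'', I would show the path is forced into a detour of order $2^n$ at every scale $k$, with these detours essentially independent across the $n$ scales, so that summing gives $\gtrsim n2^n$. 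Equivalently one sets up a self-similar lower-bound recursion $\mathrm{diam}(G_n)\ge 2\,\mathrm{diam}(G_{n-1})+c2^n$ by proving that the relevant port-to-port distances inside a block are genuinely of order $\mathrm{diam}(G_{n-1})$, i.e. that detouring through neighbouring blocks yields no shortcut. Verifying this ``no-shortcut'' property against all cross-block excursions is the crux of the argument.
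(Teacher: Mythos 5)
Your plan is sound and, on the lower bound, coincides exactly with the paper's: the paper asserts the same self-similar recursion $\mathrm{diam}(G_{n+1})\ge 2\,\mathrm{diam}(G_n)+c2^n$ ``by the graph structure of $G_n$'' and unrolls it to get $n\cdot 2^n$, so your identification of the ``no-shortcut'' property as the crux is accurate --- the paper simply does not prove it, whereas you at least name it and sketch the multiscale reformulation (a forced $\asymp 2^n$ detour at each of the $n$ scales), which is the correct unrolled form of the recursion. On the upper bound you genuinely diverge: the paper does not set up a recursion but instead routes any two vertices onto the outer circumference path $\mathrm{Out}_n$, importing $|\mathrm{Out}_n|=3n\cdot 2^n$ from \cite[Proposition 5.2]{BKNPPT12} and bounding $d_n(w,\mathrm{Out}_n)\lesssim n\cdot 2^n$, so the whole upper bound is outsourced to the cited combinatorics of barycentric subdivisions. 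Your recursion $\mathrm{diam}(G_n)\le 2\,\mathrm{diam}(G_{n-1})+C2^n$ is self-contained but hinges on the port-to-port sub-lemma; this is provable (the doubling chain of Lemma \ref{lem_notmetric} already crosses a level-$1$ cell from a boundary cell to a gate cell in $2^{n-1}$ steps, and is sharp by Lemma \ref{lem_metric_pre1}, which with $\mu=1/2$ forces any crossing with different entries to use at least $2^{n-1}$ cells without any factor of $n$), but you must check that the entry and exit ports of consecutive intermediate blocks can be made to line up, which is the bookkeeping you correctly flag. Two small points: a path between antipodal blocks passes through four blocks (two endpoint plus two intermediate), not three, though this only changes the constant; and your observation that the $d_\mu$-weight bound alone yields only $L\gtrsim 2^n$ is exactly right and is the same tension that drives Proposition \ref{prop_chain}. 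Net assessment: correct strategy, identical to the paper on the harder (lower) bound, and a more elementary but more labor-intensive alternative on the upper bound; in both your write-up and the paper's, the lower-bound recursion is the step that still needs an honest proof.
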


\begin{proof}
For arbitrary fixed $n\ge1$. Obviously, $G_n$ is a planer graph. Denote the outer circumference path $\mathrm{Out}_n$ as in \cite[Definition 5.1]{BKNPPT12}. By \cite[Proposition 5.2]{BKNPPT12}, we have
$$|\mathrm{Out}_n|=3n\cdot2^n.$$

For all $w\in W_n$, we have
$$d_n(w,\mathrm{Out}_n):=\inf\myset{d_n(w,v):v\in\mathrm{Out}_n}\lesssim n\cdot2^n.$$
For all $w^{(1)},w^{(2)}\in W_n$, we have
$$d_n(w^{(1)},w^{(2)})\le d_n(w^{(1)},\mathrm{Out}_n)+d_n(w^{(2)},\mathrm{Out}_n)+|\mathrm{Out}_n|\lesssim n\cdot2^n,$$
hence
$$\mathrm{diam}(G_n)\lesssim n\cdot2^n.$$

By the graph structure of $G_n$, there exists some positive constant $c$ such that for all $n\ge1$
$$\mathrm{diam}(G_{n+1})\ge2\mathrm{diam}(G_n)+c2^n.$$
By recursion, we have
$$\mathrm{diam}(G_n)\gtrsim n\cdot2^n.$$
Therefore, we have
$$\mathrm{diam}(G_n)\asymp n\cdot2^n.$$
\end{proof}

\begin{myrmk}
It was conjectured in \cite[Conjecture 5.4]{BKNPPT12} an explicit formula for $\mathrm{diam}(G_n)$.
\end{myrmk}

\begin{proof}[Proof of Proposition \ref{prop_chain}]
Suppose that $d_\mu$ satisfies the chain condition. Let $C$ be the constant in the definition of the chain condition, take $k_1\ge1$ satisfying $\mu^{-k_1}>C$, let $c$ be the constant in Lemma \ref{lem_diam}.

For all $k>2c\mu^{-k_1}$. Take $w,v\in W_{k}$ such that $d_{k}(w,v)=\mathrm{diam}(G_{k})$, take $x\in K_w$, $y\in K_v$, then there exists a sequence $\myset{x_0,\ldots,x_{\lceil \mu^{-(k+k_1)}\rceil}}$ in $K$ with $x_0=x$ and $x_{\lceil\mu^{-(k+k_1)}\rceil}=y$ such that
$$d_\mu(x_i,x_{i+1})\le C\frac{d_\mu(x,y)}{\lceil\mu^{-(k+k_1)}\rceil}\le\frac{C}{\mu^{-(k+k_1)}}<\mu^k.$$
Take $w^{(0)},\ldots,w^{(\lceil\mu^{-(k+k_1)}\rceil)}\in W_{k}$ with $w^{(0)}=w$, $w^{(\lceil\mu^{-(k+k_1)}\rceil)}=v$ and $x_i\in K_{w{(i)}}$ for all $i=0,\ldots,\lceil\mu^{-(k+k_1)}\rceil$.

For all $i=0,\ldots,\lceil\mu^{-(k+k_1)}\rceil-1$, we have $K_{w^{(i)}}\cap K_{w^{(i+1)}}\ne\emptyset$, otherwise, by Corollary \ref{cor_metric}, we have
$$d_\mu(x_i,x_{i+1})\ge d_\mu(K_{w^{(i)}},K_{w^{(i+1)}})\ge\mu^{k},$$
contradiction! Hence for all $i=0,\ldots,\lceil\mu^{-(k+k_1)}\rceil-1$, either $w^{(i)}=w^{(i+1)}$ or $w^{(i)}\sim_{k}w^{(i+1)}$. Hence
$$\mathrm{diam}(G_{k})=d_{k}(w,v)=d_{k}(w^{(0)},w^{(2^{k+k_1})})\le\lceil\mu^{-(k+k_1)}\rceil.$$
By Lemma \ref{lem_diam}, we have $\mathrm{diam}(G_{k})\ge\frac{1}{c}(k\cdot2^k)$, hence
$$\frac{1}{c}(k\cdot2^k)\le\lceil\mu^{-(k+k_1)}\rceil\le2\mu^{-(k+k_1)},$$
that is,
$$k\le\frac{2c}{(2\mu)^k}\mu^{-k_1}\le 2c\mu^{-k_1},$$
contradiction!

We only need to show that Equation (\ref{eqn_thetachain}) holds for a sequence $\myset{n_k}_{k\ge1}$ with
$$\sup_{k\ge1}\frac{n_{k+1}}{n_k}<+\infty$$
for all $x,y\in K$ with $d_\mu(x,y)<1/2$.

Let $c$ be the constant in Lemma \ref{lem_diam}.

Let $n_k=2([ck2^k]+1)$. It is obvious that $\sup_{k\ge1}n_{k+1}/n_k<+\infty$.

For all $x,y\in K$ with $x\ne y$ and $d_\mu(x,y)<1/2$, there exists some integer $N\ge1$ such that
$$\frac{1}{2^{N+1}}\le d_\mu(x,y)<\frac{1}{2^{N}}.$$
There exist $w,v\in W_N$ such that $x\in K_w,y\in K_v$, then $K_{w}\cap K_v\ne\emptyset$, otherwise, by Corollary \ref{cor_metric}, we have
$$d_\mu(x,y)\ge d_\mu(K_w,K_v)\ge\mu^N\ge\frac{1}{2^N},$$
contradiction!

Since $\mathrm{diam}(G_k)\le ck2^k$ by Lemma \ref{lem_diam}, there exist $w^{(0)},\ldots,w^{(n_k)}\in W_k$ with $x\in K_{ww^{(0)}}$, $y\in K_{vw^{(n_k)}}$ satisfying
$$K_{ww^{(n_k/2)}}\cap K_{vw^{(n_k/2+1)}}\ne\emptyset,$$
$$K_{ww^{(i)}}\cap K_{ww^{(i+1)}}\ne\emptyset\text{ for all }i=0,\ldots,\frac{n_k}{2}-1,$$
$$K_{vw^{(i)}}\cap K_{vw^{(i+1)}}\ne\emptyset\text{ for all }i=\frac{n_k}{2}+1,\ldots,n_k-1.$$
Take arbitrary $x_i\in K_{ww^{(i)}}$ for all $i=1,\ldots,n_k/2$ and $x_i\in K_{vw^{(i)}}$ for all $i=n_k/2+1,\ldots,n_k$, then
$$d_\mu(x_i,x_{i+1})\le2\mu^{N+k}\le\frac{2}{\mu}\mu^kd_\mu(x,y).$$
Take a constant $C$ satisfying
$$2^{1+2\theta}c^\theta k^\theta\le C\mu\frac{1}{(2^\theta\mu)^k}\text{ for all }k\ge1,$$
then
$$d_\mu(x_i,x_{i+1})\le\frac{2}{\mu}\mu^kd_\mu(x,y)\le C\frac{d_\mu(x,y)}{n_k^\theta}\text{ for all }i=0,\ldots,n_k-1.$$
\end{proof}

\section{Proof of Theorem \ref{thm_Harnack}}\label{sec_Harnack}

The following result states that an $n$-cell is comparable to a ball with radius $\mu^{n}$ with respect to the intrinsic metric $d_\mu$.

\begin{myprop}\label{prop_cellandball}
For all $n\ge0$, for all $w\in W_n$, we have the following results.
\begin{enumerate}[(1)]
\item For all $x\in K_w$, we have $K_w\subseteq B_\mu(x,2\mu^n)$.
\item There exists $x\in K_w$ such that $B_\mu(x,\mu^{n+2})\subseteq K_w$.
\end{enumerate}
\end{myprop}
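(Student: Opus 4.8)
The plan is to handle the two statements separately; part (1) is immediate from the diameter formula, while part (2) is the substantive one and rests on locating an $(n+2)$-subcell of $K_w$ that meets no other $n$-cell.

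For part (1), fix $x\in K_w$. By Theorem \ref{thm_metric} we have $\mathrm{diam}_\mu(K_w)=\mu^n$, so every $y\in K_w$ satisfies $d_\mu(x,y)\le\mu^n<2\mu^n$ and hence $y\in B_\mu(x,2\mu^n)$. The only role of the factor $2$ is to convert the non-strict bound $d_\mu(x,y)\le\mu^n$ into the strict inequality defining the open ball.

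For part (2), I would fix any $i\in W$ together with a digit $j\in\{1,2,3,4\}$ and take $x$ to be an arbitrary point of the $(n+2)$-cell $K_{wij}$. The heart of the matter is the following claim: $K_{wij}\cap K_v=\emptyset$ for every $v\in W_n$ with $v\ne w$. Granting this, let $y\notin K_w$ and choose $v\in W_{n+2}$ with $y\in K_v$; the length-$n$ prefix $v'$ of $v$ satisfies $v'\ne w$ (otherwise $K_v\subseteq K_w$ and $y\in K_w$), so $K_v\subseteq K_{v'}$ and therefore $K_{wij}\cap K_v\subseteq K_{wij}\cap K_{v'}=\emptyset$. Since $K_{wij}$ and $K_v$ are disjoint $(n+2)$-cells, Corollary \ref{cor_metric} gives $d_\mu(x,y)\ge d_\mu(K_{wij},K_v)\ge\mu^{n+2}$. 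Thus no point outside $K_w$ lies in $B_\mu(x,\mu^{n+2})$, i.e. $B_\mu(x,\mu^{n+2})\subseteq K_w$, which is part (2).

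The main obstacle is the claim, which is where the concrete identification rules of the hexacarpet enter and where the choice $j\in\{1,2,3,4\}$ is used. I would argue that every representative in $W_\infty$ of a point $p=\pi(wiju)\in K_{wij}$ must begin with $w$; this forces $p\notin K_v$ for $v\ne w$, since a length-$n$ prefix of any representative would then have to equal both $w$ and $v$. Because each point of $K$ has at most two preimages under $\pi$, the only representative of $p$ besides $wiju$ itself is obtained by a single basic identification, which replaces one digit, at some position $k$, by a cyclic neighbour and requires the digit at position $k+1$ to lie in $\{1,2,3,4\}$ and all digits from position $k+2$ onward to lie in $\{0,5\}$. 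As the digit of $wiju$ at position $n+2$ is $j\in\{1,2,3,4\}$, any identification with $k\le n$ would force $j\in\{0,5\}$, which is impossible; hence $k\ge n+1$ and the first $n$ digits, namely $w$, are left unchanged. This proves the claim and hence the proposition. I expect the bookkeeping of positions relative to $w$, $i$ and $j$ in this identification analysis to be the one genuinely delicate point.
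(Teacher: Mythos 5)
Your proposal is correct and follows essentially the same route as the paper: part (1) from the diameter formula, and part (2) by choosing an $(n+2)$-cell lying in $\mathrm{int}(K_w)$ (the paper takes $K_{w22}$, citing a figure) and applying Corollary \ref{cor_metric} to conclude that every point outside $K_w$ is at distance at least $\mu^{n+2}$. Your explicit verification via the identification rules that $K_{wij}$ with $j\in\{1,2,3,4\}$ meets no other $n$-cell is a welcome elaboration of a step the paper leaves to inspection.
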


\begin{proof}
(1) Since $\mathrm{diam}_\mu(K_w)=\mu^{n}$, for all $x\in K_w$, we have
$$K_w\subseteq B_\mu(x,2\mathrm{diam}_\mu(K_w))=B_\mu(x,2\mu^{n}).$$

(2) Take $x\in K_{w22}\subseteq\mathrm{int}(K_w)$, see \cite[FIGURE 2]{BKNPPT12}, for all $v\in W_{n+2}$ with $K_{w22}\cap K_v=\emptyset$, by Corollary \ref{cor_metric}, we have $d_\mu(K_{w22},K_v)\ge\mu^{n+2}$. In particular, for all $y\not\in K_w$, we have $d_\mu(x,y)\ge\mu^{n+2}$, hence $B_\mu(x,\mu^{n+2})\subseteq K_w$.
\end{proof}

For all $n\ge0$, let $X^{(n)}$ be the simple random walk on $H_n$, let $\tau_B$ be the first exit time of $X^{(n)}$ from a subset $B$ of $V_n$.

We use knight move technique developed by Barlow and Bass \cite{BB89}. We need do some preparations.

First, we have corner move as follows. 

\begin{mylem}\label{lem_corner}
For all $n\ge1$, for all $w^{(1)},w^{(2)}\in W_n$ with $w^{(1)}\ne w^{(2)}$ and $K_{w^{(1)}}\cap K_{w^{(2)}}\ne\emptyset$. Each of $\dd K_{w^{(1)}},\dd K_{w^{(2)}}$ consists of six disjoint parts, $\dd K_{w^{(1)}}\cup\dd K_{w^{(2)}}$ consists of ten disjoint parts, $\dd K_{w^{(1)}}\cap\dd K_{w^{(2)}}$ consists of two disjoint parts. Denote $L_0$ as one part of $\dd K_{w^{(1)}}\cap\dd K_{w^{(2)}}$, denote $L_1,\ldots,L_8$ as the eight parts of $(\dd K_{w^{(1)}}\backslash\dd K_{w^{(2)}})\cup(\dd K_{w^{(2)}}\backslash\dd K_{w^{(1)}})$, where $L_1,L_8$ are two parts adjacent to $L_0$. Let $B=(K_{w^{(1)}}\cup K_{w^{(2)}})\backslash(L_1\cup\ldots\cup L_8)$, see Figure \ref{fig_corner}. Then for all $k\ge n$, for all $x\in L_0\cap V_k$, we have
$$\mathbb{P}_x\left[X^{(k)}_{\tau_B}\in L_1\right]\ge\frac{1}{8}.$$

\begin{figure}[ht]
\centering

\begin{tikzpicture}[scale=1]

\draw (0,0)--(1,1.73205080756887)--(2,0)--(1,-1.73205080756887)--cycle;
\draw (0,0)--(2,0);

\draw[very thick] (0,0)--(1,0);
\draw[very thick] (0,0)--(1/2,1/2*1.73205080756887);

\draw[fill=black] (0,0) circle (0.03);
\draw[fill=black] (1/2,1/2*1.73205080756887) circle (0.03);
\draw[fill=black] (1,1.73205080756887) circle (0.03);
\draw[fill=black] (1.5,1/2*1.73205080756887) circle (0.03);
\draw[fill=black] (2,0) circle (0.03);
\draw[fill=black] (1.5,-0.5*1.73205080756887) circle (0.03);
\draw[fill=black] (1,-1.73205080756887) circle (0.03);
\draw[fill=black] (0.5,-0.5*1.73205080756887) circle (0.03);

\draw[fill=black] (1,0) circle (0.03);

\draw (0.6,0.3) node {$L_0$};

\draw (-0.1,0.5) node {$L_1$};
\draw (0.4,0.5+0.5*1.73205080756887) node {$L_2$};
\draw (1.6,0.5+0.5*1.73205080756887) node {$L_3$};
\draw (2.1,0.5) node {$L_4$};
\draw (2.1,-0.5) node {$L_5$};
\draw (1.6,-0.5-0.5*1.73205080756887) node {$L_6$};
\draw (0.4,-0.5-0.5*1.73205080756887) node {$L_7$};
\draw (-0.1,-0.5) node {$L_8$};
\end{tikzpicture}
\caption{Corner Move}\label{fig_corner}
\end{figure}
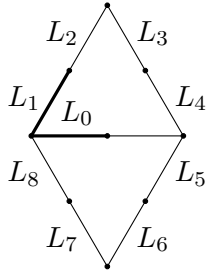
\end{mylem}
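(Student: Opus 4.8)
The plan is to run the Barlow--Bass knight--move argument by hand, using only reflection symmetries of the two--cell configuration together with the discrete maximum principle for the simple random walk; the Harnack inequality is itself the target of Theorem \ref{thm_Harnack}, so it may not be invoked here. Write $B$ also for the rhombus $K_{w^{(1)}}\cup K_{w^{(2)}}$, which by Lemma \ref{lem_metric_basic} is two copies of a cell glued along the common arc $\dd K_{w^{(1)}}\cap\dd K_{w^{(2)}}$; this arc splits into $L_0$ and one further part $L_0'$, meeting at the centre of the rhombus. Two Euclidean reflections preserve $B$ and, because the hexacarpet identifies $wikv$ with $wjkv$ symmetrically, extend to graph automorphisms of $H_k$ restricted to $B\cap V_k$ for every $k\ge n$: the reflection $\rho$ across the line carrying $L_0\cup L_0'$, which fixes $L_0$ pointwise and sends $L_i\mapsto L_{9-i}$; and the reflection $\eta$ across the perpendicular bisector of that line, which sends $L_1\leftrightarrow L_4$, $L_2\leftrightarrow L_3$, $L_5\leftrightarrow L_8$, $L_6\leftrightarrow L_7$ and interchanges $L_0\leftrightarrow L_0'$. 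Throughout, fix the start $x\in L_0\cap V_k$ and set $p_i=\mathbb{P}_x[X^{(k)}_{\tau_B}\in L_i]$.

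First I would record the consequences of $\rho$. The walk leaves the finite connected set $B$ through exactly one of $L_1,\ldots,L_8$, so $\sum_{i=1}^8 p_i=1$; and since $\rho$ fixes $x$ while permuting the $L_i$ by $i\mapsto 9-i$, we get $p_1=p_8$, $p_2=p_7$, $p_3=p_6$, $p_4=p_5$, whence $p_1+p_2+p_3+p_4=\tfrac12$. It therefore suffices to prove $p_1\ge p_2$, $p_1\ge p_3$ and $p_1\ge p_4$, for then $4p_1\ge p_1+p_2+p_3+p_4=\tfrac12$, i.e. $p_1\ge\tfrac18$.

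The comparisons that a reflection can see I would obtain by an antisymmetry-plus-maximum-principle argument. For $p_1$ versus $p_4$, the function $y\mapsto\mathbb{P}_y[X^{(k)}_{\tau_B}\in L_1]-\mathbb{P}_y[X^{(k)}_{\tau_B}\in L_4]$ is harmonic in $B$, is $\eta$-antisymmetric (because $\eta$ swaps $L_1$ and $L_4$), vanishes on the axis of $\eta$, and on the boundary of the half of $B$ containing $L_0$ equals $+1$ on $L_1$ and $0$ on the other outer pieces there; the discrete maximum principle then forces it to be $\ge 0$ on that half, hence on $L_0$, giving $p_1\ge p_4$. The same argument applied to $L_2$ versus $L_3$ gives $p_2\ge p_3$, so $p_1\ge p_3$ will follow once $p_1\ge p_2$ is known.

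The remaining inequality $p_1\ge p_2$ is the crux and the main obstacle. Here $L_1$ and $L_2$ are the two halves of one edge of a single triangular cell, $L_1$ the half adjacent to the base corner carrying $L_0$ and $L_2$ the half adjacent to the apex, and no reflection of $B$ interchanges them, so pure symmetry is unavailable. What is needed is a genuine monotonicity of harmonic measure: starting on the base $L_0$, the walk exits through the base-adjacent half $L_1$ at least as readily as through the apex-adjacent half $L_2$. I would try to establish this by the discrete maximum principle applied to $y\mapsto\mathbb{P}_y[X^{(k)}_{\tau_B}\in L_1]-\mathbb{P}_y[X^{(k)}_{\tau_B}\in L_2]$, reducing the claim, via the self-similar decomposition of $B\cap V_k$ into excursions across sub-cells, to a finite check at the coarsest levels $k=n,n+1$ (where $B\cap V_k$ has boundedly many vertices) and then propagating it inward by a renewal/induction on $k$; an alternative is to exhibit an explicit superharmonic comparison function on $B$ separating $L_1$ from $L_2$. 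Verifying the base case and controlling the induction uniformly in $k$ is where the real work lies.
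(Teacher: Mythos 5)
Your reduction is sound as far as it goes, and it follows the same strategy as the paper: the paper's entire proof of Lemmas \ref{lem_corner}--\ref{lem_knight2} is the single sentence that, ``using reflection principle several times,'' $p_1$ is the largest of the $p_i$'s, whence $p_1\ge 1/8$ because $\sum_{i=1}^{8}p_i=1$. Your use of $\rho$ to obtain $p_1=p_8$, $p_2=p_7$, $p_3=p_6$, $p_4=p_5$, and of $\eta$ (via antisymmetry of $y\mapsto\bbP_y[X^{(k)}_{\tau_B}\in L_1]-\bbP_y[X^{(k)}_{\tau_B}\in L_4]$ together with the maximum principle on the half of $B$ containing $L_0$) to obtain $p_1\ge p_4$ and $p_2\ge p_3$, is correct and is exactly the kind of reflection argument intended.

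However, as you yourself flag, the proposal does not prove the decisive inequality $p_1\ge p_2$, and this is a genuine gap rather than a routine omission. You are right that the symmetry group of $B$ is only $\myset{1,\rho,\eta,\rho\eta}$, that $L_1$ and $L_2$ lie in different orbits, and that the reflection of the single cell $K_{w^{(1)}}$ which swaps $L_1$ and $L_2$ does not extend to $B$: it carries the non-absorbing internal boundary $\dd K_{w^{(1)}}\cap\dd K_{w^{(2)}}$ onto the absorbing pieces $L_3\cup L_4$, so neither path-folding nor an antisymmetry-plus-maximum-principle argument applies verbatim, and the natural renewal identity one gets by stopping at the axis of that reflection does not close on its own. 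Since $p_1\ge1/8$ is equivalent to $3p_1\ge p_2+p_3+p_4$ and you only have $p_4\le p_1$ and $p_3\le p_2$, everything hinges on $p_1\ge p_2$; the two routes you sketch (induction on $k$ from a finite base case, or an explicit superharmonic barrier separating $L_1$ from $L_2$) are plausible but entirely unexecuted, so at its one essential step the proposal is a plan rather than a proof. To be fair, the paper's own proof supplies no more detail here either---it asserts the outcome of ``several'' reflections without exhibiting one that compares $L_1$ with $L_2$---so your write-up has the merit of isolating exactly where the content of the lemma lies; but as written the gap at $p_1\ge p_2$ remains open.
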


Second, we have knight move \Rmnum{1} as follows.

\begin{mylem}\label{lem_knight1}
For all $n\ge0$, for all $w\in W_n$. $\dd K_w\cap(\dd K_{w0}\cup\ldots\cup\dd K_{w5})$ consists of twelve disjoint parts, $\dd K_{w0}\cap\dd K_{w1}$ consists of two disjoint parts. Denote $L_0$ as one part of $\dd K_{w0}\cap\dd K_{w1}$ which is not adjacent to $\dd K_w$, denote $L_1,\ldots,L_{12}$ as the twelve parts of $\dd K_w\cap(\dd K_{w0}\cup\ldots\cup\dd K_{w5})$, where $L_1,L_{12}$ are two parts adjacent to $\dd K_{w0}\cap\dd K_{w1}$. Let $B=\mathrm{int}(K_w)$, see Figure \ref{fig_knight1}. Then for all $k\ge n$, for all $x\in L_0\cap V_k$, we have
$$\mathbb{P}_x\left[X^{(k)}_{\tau_B}\in L_1\right]\ge\frac{1}{12}.$$

\begin{figure}[ht]
\centering

\begin{tikzpicture}[scale=1]

\draw (1,1.73205080756887)--(2,0)--(1,-1.73205080756887)--(-1,-1.73205080756887)--(-2,0)--(-1,1.73205080756887)--cycle;
\draw (-2,0)--(2,0);
\draw (1,1.73205080756887)--(-1,-1.73205080756887);
\draw (-1,1.73205080756887)--(1,-1.73205080756887);

\draw [very thick] (0,0)--(1,0);
\draw [very thick] (2,0)--(1.5,0.5*1.73205080756887);

\draw[fill=black] (0,0) circle (0.03);
\draw[fill=black] (1,0) circle (0.03);

\draw[fill=black] (2,0) circle (0.03);
\draw[fill=black] (1.5,0.5*1.73205080756887) circle (0.03);
\draw[fill=black] (1,1.73205080756887) circle (0.03);
\draw[fill=black] (0,1.73205080756887) circle (0.03);
\draw[fill=black] (-1,1.73205080756887) circle (0.03);
\draw[fill=black] (-1.5,0.5*1.73205080756887) circle (0.03);
\draw[fill=black] (-2,0) circle (0.03);
\draw[fill=black] (-1.5,-0.5*1.73205080756887) circle (0.03);
\draw[fill=black] (-1,-1.73205080756887) circle (0.03);
\draw[fill=black] (0,-1.73205080756887) circle (0.03);
\draw[fill=black] (1,-1.73205080756887) circle (0.03);
\draw[fill=black] (1.5,-1/2*1.73205080756887) circle (0.03);

\draw (0.5,0.2) node {$L_0$};

\draw (2.1,0.5) node {$L_1$};
\draw (1.5,0.5+0.5*1.73205080756887) node {$L_2$};
\draw (0.5,2) node {$L_3$};

\draw (-2.1,0.5) node {$L_6$};
\draw (-1.5,0.5+0.5*1.73205080756887) node {$L_5$};
\draw (-0.5,2) node {$L_4$};

\draw (-2.1,-0.5) node {$L_7$};
\draw (-1.5,-0.5-0.5*1.73205080756887) node {$L_8$};
\draw (-0.5,-2) node {$L_9$};

\draw (2.1,-0.5) node {$L_{12}$};
\draw (1.5,-0.5-0.5*1.73205080756887) node {$L_{11}$};
\draw (0.5,-2) node {$L_{10}$};

\end{tikzpicture}
\caption{Knight Move \Rmnum{1}}\label{fig_knight1}
\end{figure}
\end{mylem}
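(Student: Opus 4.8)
The plan is to prove Lemma \ref{lem_knight1} in the spirit of the corner move (Lemma \ref{lem_corner}): I would exploit the reflection symmetry of the cell $K_w$ to cut down the number of parts that must be estimated, and then extract a scale-invariant lower bound from Lemma \ref{lem_corner} via the strong Markov property. First I would reduce to $w=\emptyset$. For $k\ge n$ the contraction $f_w$ identifies $H_k$ restricted to $K_w$ with $H_{k-n}$ on $K$ and carries $\mathrm{int}(K_w)$, the parts $L_0,\ldots,L_{12}$, and the walk $X^{(k)}$ to the corresponding objects at scale $k-n$; thus it suffices to bound the exit distribution of $X^{(k)}$ from $B=\mathrm{int}(K)$, started at $x\in L_0\cap V_k$, \emph{uniformly in the scale}. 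By Lemma \ref{lem_metric_basic}(1) the boundary $\dd K=\coprod_{i\in W}(\dd K\cap K_i)$ is covered by $L_1,\ldots,L_{12}$, and since $H_k$ is finite we have $\tau_B<\infty$ almost surely with $X^{(k)}_{\tau_B}$ landing in some $L_j$, so $\sum_{j=1}^{12}\bbP_x[X^{(k)}_{\tau_B}\in L_j]=1$.

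Next I would use the reflection $R$ across the line through $L_0$. This is a graph automorphism of $H_k$ preserving $B$; it fixes every vertex of $L_0$, hence fixes $x$, and sends $L_j$ to $L_{13-j}$, so in particular $L_1\leftrightarrow L_{12}$. Invariance of the simple random walk under $R$ gives $\bbP_x[X^{(k)}_{\tau_B}\in L_1]=\bbP_x[X^{(k)}_{\tau_B}\in L_{12}]$, and it is therefore enough to show that the walk leaves $B$ through the pair $L_1\cup L_{12}$ of half-edges meeting at the outer endpoint of the line through $L_0$ with probability at least $1/6$. To produce this I would steer the walk outward from the inner edge $L_0$ toward that outer vertex and apply Lemma \ref{lem_corner} to the sub-cells $K_0$ and $K_1$, whose shared boundary is exactly the line through $L_0$ and whose two boundary parts adjacent to its outer portion are precisely $L_1$ and $L_{12}$; the strong Markov property at the successive times the walk meets that shared boundary, together with the scale-invariance of the corner-move probability, assembles the desired constant.

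The main obstacle will be the \emph{uniformity in $k$}: I must guarantee that the lower bound does not deteriorate as $k\to\infty$. The danger is that steering the walk from the inner part $L_0$ to the outer part of $\dd K_0\cap\dd K_1$ naively costs a number of corner moves growing with $k$, whose product of factors $\ge 1/8$ would vanish. Overcoming this requires organizing the argument so that only finitely many scale-free applications of Lemma \ref{lem_corner} appear, reducing everything to the fixed coarse structure of the six sub-cells and their shared edges, where the number of hops is bounded independently of $k$, and carefully tracking how the eight/twelve parts transform under the reflections identifying consecutive configurations, so that each corner move is aimed at the next part toward $L_1\cup L_{12}$ rather than back toward the center. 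Self-similarity together with the scale-freeness of Lemma \ref{lem_corner} is exactly what makes such a finite, scale-independent chain possible and yields the asserted constant $1/12$.
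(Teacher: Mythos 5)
Your reduction to $w=\emptyset$, the observation that the exit probabilities $p_i=\bbP_x\bigl[X^{(k)}_{\tau_B}\in L_i\bigr]$ sum to one, and the reflection across the diagonal through $L_0$ giving $p_1=p_{12}$ are all correct and are indeed part of the intended argument. The gap is in how you propose to finish, namely proving $p_1+p_{12}\ge 1/6$ by steering the walk with Lemma \ref{lem_corner} applied to the pair $K_{w0},K_{w1}$. In the corner move, the distinguished exit parts $L_1,L_8$ are the two parts of the symmetric difference adjacent to the \emph{starting} part. Your walk starts on the \emph{inner} part $L_0$ of $\dd K_{w0}\cap\dd K_{w1}$, and the parts adjacent to it (at the centre of $K_w$) are interior half-diagonals shared with $K_{w2}$ and $K_{w5}$, not pieces of $\dd K_w$; so Lemma \ref{lem_corner} started from $L_0$ only sends the walk to another inner half-diagonal and circulates it around the centre. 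To make $L_1^{\mathrm{corner}},L_8^{\mathrm{corner}}$ coincide with the knight-move parts $L_1,L_{12}$ you would have to start the corner move on the \emph{outer} part of $\dd K_{w0}\cap\dd K_{w1}$, and a lower bound, uniform in $k$, for the probability that the walk travels from the inner part to the outer part before leaving $\mathrm{int}(K_w)$ is exactly the content of the knight move itself: none of the stated lemmas provides it, and no finite chain of corner moves at the scale of the six sub-cells produces it. This geometric obstruction is precisely why the knight move appears in Barlow--Bass as a separate estimate rather than as a consequence of corner moves.

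The paper's proof is different and much shorter: applying the reflection (folding) argument several times --- not only the single symmetry fixing $L_0$ --- one shows that $p_1$ is the \emph{largest} of $p_1,\ldots,p_{12}$; since $\sum_{i=1}^{12}p_i=1$, this immediately gives $p_1\ge 1/12$. In these terms, what is missing from your argument is the family of comparisons $p_1\ge p_i$ for $i=2,\ldots,6$; the one reflection you invoke only yields $p_i=p_{13-i}$. If you replace the steering step by those additional foldings, your outline collapses to the paper's proof.
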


Third, we have knight move \Rmnum{2} as follows.

\begin{mylem}\label{lem_knight2}
For all $n\ge1$, for all $w^{(1)},w^{(2)}\in W_n$ with $w^{(1)}\ne w^{(2)}$ and $K_{w^{(1)}}\cap K_{w^{(2)}}\ne\emptyset$, there exist $i^{(1)},i^{(2)},j^{(1)},j^{(2)}\in W$ with $i^{(1)}\ne i^{(2)}$ and $j^{(1)}\ne j^{(2)}$ such that $K_{w^{(1)}i^{(1)}}\cap K_{w^{(2)}j^{(1)}}\ne\emptyset$, $K_{w^{(2)}j^{(1)}}\cap K_{w^{(2)}j^{(2)}}\ne\emptyset$, $K_{w^{(1)}i^{(2)}}\cap K_{w^{(2)}j^{(2)}}\ne\emptyset$ and $K_{w^{(1)}i^{(1)}}\cap K_{w^{(1)}i^{(2)}}\ne\emptyset$. Let $v^{(1)}=w^{(1)}i^{(1)}$, $v^{(2)}=w^{(2)}j^{(1)}$, $v^{(3)}=w^{(2)}j^{(2)}$ and $v^{(4)}=w^{(1)}i^{(2)}$.

$\cup_{k=1}^4(\dd K_{v^{(k)}}\backslash(\cup_{l\ne k}\dd K_{v^{(l)}}))$ consists of eight disjoint parts, $\dd K_{v^{(1)}}\cap\dd K_{v^{(2)}}$ consists of two disjoint parts. Denote $L_0$ as one part of $\dd K_{v^{(1)}}\cap\dd K_{v^{(2)}}$ which is not adjacent to $\cup_{k=1}^4(\dd K_{v^{(k)}}\backslash(\cup_{l\ne k}\dd K_{v^{(l)}}))$, denote $L_1,\ldots,L_8$ as the eight parts of

\noindent $\cup_{k=1}^4(\dd K_{v^{(k)}}\backslash(\cup_{l\ne k}\dd K_{v^{(l)}}))$, where $L_1,L_8$ are two parts adjacent to $\dd K_{v^{(1)}}\cap\dd K_{v^{(2)}}$. Let $B=(\cup_{k=1}^4K_{v^{(k)}})\backslash(\cup_{k=1}^8L_k)$, see Figure \ref{fig_knight2}. Then for all $k\ge n$, for all $x\in L_0\cap V_k$, we have
$$\mathbb{P}_x\left[X^{(k)}_{\tau_B}\in L_1\right]\ge\frac{1}{8}.$$
\begin{figure}[ht]
\centering

\begin{tikzpicture}[scale=1]

\draw (-2,0)--(0,2)--(2,0)--(0,-2)--cycle;
\draw (-2,0)--(2,0);
\draw (0,-2)--(0,2);

\draw[very thick] (0,0)--(1,0);
\draw[very thick] (2,0)--(1,1);

\draw[fill=black] (0,0) circle (0.03);
\draw[fill=black] (1,0) circle (0.03);

\draw[fill=black] (2,0) circle (0.03);
\draw[fill=black] (1,1) circle (0.03);
\draw[fill=black] (0,2) circle (0.03);
\draw[fill=black] (-1,1) circle (0.03);
\draw[fill=black] (-2,0) circle (0.03);
\draw[fill=black] (-1,-1) circle (0.03);
\draw[fill=black] (0,-2) circle (0.03);
\draw[fill=black] (1,-1) circle (0.03);

\draw (0.5,0.2) node {$L_0$};

\draw (1.75,0.7) node {$L_1$};
\draw (0.75,1.7) node {$L_2$};

\draw (-1.75,0.7) node {$L_4$};
\draw (-0.75,1.7) node {$L_3$};

\draw (-1.75,-0.7) node {$L_5$};
\draw (-0.75,-1.7) node {$L_6$};

\draw (1.75,-0.7) node {$L_8$};
\draw (0.75,-1.7) node {$L_7$};

\end{tikzpicture}
\caption{Knight Move \Rmnum{2}}\label{fig_knight2}
\end{figure}
\end{mylem}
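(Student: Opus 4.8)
The plan is to exploit the reflection symmetries of the four‑cell configuration $\cup_{k=1}^4 K_{v^{(k)}}$ together with the reflection principle for the simple random walk, and to conclude by showing that $L_1$ is the single most likely exit portal.

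First I would settle the combinatorial setup. The existence of $i^{(1)},i^{(2)},j^{(1)},j^{(2)}$ and the $4$-cycle $K_{v^{(1)}}\cap K_{v^{(2)}}\neq\emptyset$, $K_{v^{(2)}}\cap K_{v^{(3)}}\neq\emptyset$, $K_{v^{(3)}}\cap K_{v^{(4)}}\neq\emptyset$, $K_{v^{(4)}}\cap K_{v^{(1)}}\neq\emptyset$ is read off from the gluing relation $\sim$ and Lemma \ref{lem_metric_basic}(2): the pair $K_{w^{(1)}},K_{w^{(2)}}$ meets along a two-piece interface, and each of the two touching sub-cells has exactly one same-parent neighbour which closes up the cycle. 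Applying the same lemma one level finer shows that each $\dd K_{v^{(k)}}$ splits into six pieces and that each of the four pairwise interfaces consists of two pieces (an inner one and an outer one reaching a vertex of the square), which produces the decomposition into $L_0,\dots,L_8$ exactly as drawn in Figure \ref{fig_knight2}.

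The heart of the argument is that the dihedral symmetries of the square act as graph automorphisms of $H_k$ restricted to $\overline{B}$, for every $k\ge n$. Place the configuration as in Figure \ref{fig_knight2}, with the centre at the origin and $L_0$ on the positive horizontal half-axis. I would use three reflections of the square: $\rho_h$ across the horizontal axis (fixing $L_0$ pointwise and swapping $L_i\leftrightarrow L_{9-i}$), $\rho_v$ across the vertical axis (swapping $L_1\leftrightarrow L_4$, $L_2\leftrightarrow L_3$, $L_5\leftrightarrow L_8$, $L_6\leftrightarrow L_7$), and the diagonal reflection $\rho_d$ across $\{y=x\}$ (swapping $L_1\leftrightarrow L_2$). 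Each is assembled from the reflection symmetries of the individual hexacarpet cells together with the symmetric nature of the $\sim$-identifications along the seams, so that it defines a graph automorphism of $H_k$ on $\overline{B}$; since the simple random walk is symmetric, the automorphism transports the law of $X^{(k)}$ started at $x$ to the law started at the image point. With these in hand the estimate is immediate: $\rho_h$ fixes every $x\in L_0$, so reversibility gives $\mathbb{P}_x[X^{(k)}_{\tau_B}\in L_1]=\mathbb{P}_x[X^{(k)}_{\tau_B}\in L_8]$, and likewise for the other $\rho_h$-pairs. For the remaining comparisons I would invoke the reflection principle: if a reflection axis separates a boundary part $A$ from its mirror $\rho(A)$ and $x$ lies on the $A$-side (or on the axis), then $\mathbb{P}_x[X^{(k)}_{\tau_B}\in A]\ge\mathbb{P}_x[X^{(k)}_{\tau_B}\in\rho(A)]$, proved by reflecting the walk at its first visit to the axis. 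Applying this with $\rho_v$ (which puts $x$ on the $L_1$-side, giving $\mathbb{P}_x[L_1]\ge\mathbb{P}_x[L_4]$ and $\mathbb{P}_x[L_8]\ge\mathbb{P}_x[L_5]$) and with $\rho_d$ (giving $\mathbb{P}_x[L_1]\ge\mathbb{P}_x[L_2]$), and composing, I obtain $\mathbb{P}_x[L_1]\ge\mathbb{P}_x[L_j]$ for every $j=1,\dots,8$. Since the walk exits through exactly one of the eight parts, $1=\sum_{j=1}^8\mathbb{P}_x[X^{(k)}_{\tau_B}\in L_j]\le 8\,\mathbb{P}_x[X^{(k)}_{\tau_B}\in L_1]$, which is the desired bound.

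The main obstacle is the middle step: justifying that the diagonal reflection $\rho_d$ really is a graph automorphism. Unlike $\rho_h,\rho_v$, it sends the inter-parent seam $K_{v^{(1)}}\cap K_{v^{(2)}}$ (a piece of the $K_{w^{(1)}}$–$K_{w^{(2)}}$ interface) to the intra-parent seam $K_{v^{(1)}}\cap K_{v^{(4)}}$ (internal to $K_{w^{(1)}}$), and one must check that these are interchangeable as far as $H_k$ is concerned. This is exactly where self-similarity pays off: by Lemma \ref{lem_metric_basic} every adjacency of cells, whether or not the cells share a parent, is realized by the same two-piece $\sim$-identification, so the local graph near any seam is the same and $\rho_d$ can be arranged to match inner piece to inner piece and outer piece to outer piece consistently. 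Making this identification precise at every level $k$ is the one genuinely technical point; everything else is symmetry bookkeeping and the reflection principle.
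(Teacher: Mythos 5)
Your proposal is correct and is essentially the paper's own argument: the paper proves Lemmas \ref{lem_corner}--\ref{lem_knight2} in one stroke by asserting that ``using reflection principle several times, $p_1$ is the largest one among all the $p_i$'s,'' and then summing $\sum_{i=1}^{8}p_i=1$. Your writeup simply makes explicit which reflections are used ($\rho_h$, $\rho_v$, $\rho_d$) and flags the only genuinely delicate point (that the diagonal reflection interchanges an inter-parent seam with an intra-parent seam, which is harmless because all seams carry the same two-piece identification), a detail the paper leaves implicit.
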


\begin{proof}[Proof of Lemma \ref{lem_corner}, Lemma \ref{lem_knight1} and Lemma \ref{lem_knight2}]
Denote
$$p_i=\bbP_x\left[X^{(k)}_{\tau_B}\in L_i\right].$$
Using reflection principle several times, we have $p_1$ is the largest one among all the $p_i$'s, then we have the desired results.
\end{proof}

\begin{myprop}\label{prop_hit}
For all $n\ge0$, for all $w\in W_n$. For all $k\ge n$, for all $x,y\in K_{w53}\cap V_k$, for all path $\gamma$ in $V_k$ from $y$ to $\dd K_w\cap V_k$, see Figure \ref{fig_hit} and \cite[FIGURE 2]{BKNPPT12}, we have
$$\bbP_x\left[X^{(k)}\text{ hits }\gamma\text{ before }\tau_{\mathrm{int}(K_w)}\right]\ge\frac{1}{12^{41}}.$$

\begin{figure}[ht]
\centering
\begin{tikzpicture}[scale=1/4]

\draw[very thick] (4*1.73205080756887,4)--(4*1.73205080756887,-4)--(0,-8)--(-4*1.73205080756887,-4)--(-4*1.73205080756887,4)--(0,8)--cycle;

\pgfmathsetmacro{\aa}{4*1.73205080756887};
\pgfmathsetmacro{\ba}{4};

\draw (\aa+2*1.7320508075688772,\ba+2)-- (\aa+2*1.7320508075688772,\ba-2)--(\aa,\ba-4)--(\aa-2*1.7320508075688772,\ba-2)--(\aa-2*1.7320508075688772,\ba+2)--(\aa,\ba+4)--cycle;

\draw (\aa-2*1.7320508075688772,\ba-2)--(\aa+2*1.7320508075688772,\ba+2);
\draw (\aa-2*1.7320508075688772,\ba+2)--(\aa+2*1.7320508075688772,\ba-2);
\draw (\aa,\ba+4)--(\aa,\ba-4);

\draw[very thick] (\aa+2*1.7320508075688772,\ba-2)--(\aa+2*1.7320508075688772,\ba+2)--(\aa,\ba+4);

\pgfmathsetmacro{\aa}{4*1.73205080756887};
\pgfmathsetmacro{\ba}{-4};

%\draw[very thick] (\aa-2*1.7320508075688772,\ba+2)--(\aa,\ba+4)--(\aa,\ba)--cycle;

\draw (\aa+2*1.7320508075688772,\ba+2)-- (\aa+2*1.7320508075688772,\ba-2)--(\aa,\ba-4)--(\aa-2*1.7320508075688772,\ba-2)--(\aa-2*1.7320508075688772,\ba+2)--(\aa,\ba+4)--cycle;

\draw (\aa-2*1.7320508075688772,\ba-2)--(\aa+2*1.7320508075688772,\ba+2);
\draw (\aa-2*1.7320508075688772,\ba+2)--(\aa+2*1.7320508075688772,\ba-2);
\draw (\aa,\ba+4)--(\aa,\ba-4);

\draw[very thick] (\aa+2*1.7320508075688772,\ba+2)--(\aa+2*1.7320508075688772,\ba-2)--(\aa,\ba-4);

\pgfmathsetmacro{\aa}{0};
\pgfmathsetmacro{\ba}{-8};

\draw (\aa+2*1.7320508075688772,\ba+2)-- (\aa+2*1.7320508075688772,\ba-2)--(\aa,\ba-4)--(\aa-2*1.7320508075688772,\ba-2)--(\aa-2*1.7320508075688772,\ba+2)--(\aa,\ba+4)--cycle;

\draw (\aa-2*1.7320508075688772,\ba-2)--(\aa+2*1.7320508075688772,\ba+2);
\draw (\aa-2*1.7320508075688772,\ba+2)--(\aa+2*1.7320508075688772,\ba-2);
\draw (\aa,\ba+4)--(\aa,\ba-4);

\draw[very thick] (\aa+2*1.7320508075688772,\ba-2)--(\aa,\ba-4)--(\aa-2*1.7320508075688772,\ba-2);

\pgfmathsetmacro{\aa}{-4*1.73205080756887};
\pgfmathsetmacro{\ba}{-4};

\draw (\aa+2*1.7320508075688772,\ba+2)-- (\aa+2*1.7320508075688772,\ba-2)--(\aa,\ba-4)--(\aa-2*1.7320508075688772,\ba-2)--(\aa-2*1.7320508075688772,\ba+2)--(\aa,\ba+4)--cycle;

\draw (\aa-2*1.7320508075688772,\ba-2)--(\aa+2*1.7320508075688772,\ba+2);
\draw (\aa-2*1.7320508075688772,\ba+2)--(\aa+2*1.7320508075688772,\ba-2);
\draw (\aa,\ba+4)--(\aa,\ba-4);

\draw[very thick] (\aa-2*1.7320508075688772,\ba+2)--(\aa-2*1.7320508075688772,\ba-2)--(\aa,\ba-4);

\pgfmathsetmacro{\aa}{-4*1.73205080756887};
\pgfmathsetmacro{\ba}{4};

\draw (\aa+2*1.7320508075688772,\ba+2)-- (\aa+2*1.7320508075688772,\ba-2)--(\aa,\ba-4)--(\aa-2*1.7320508075688772,\ba-2)--(\aa-2*1.7320508075688772,\ba+2)--(\aa,\ba+4)--cycle;

\draw (\aa-2*1.7320508075688772,\ba-2)--(\aa+2*1.7320508075688772,\ba+2);
\draw (\aa-2*1.7320508075688772,\ba+2)--(\aa+2*1.7320508075688772,\ba-2);
\draw (\aa,\ba+4)--(\aa,\ba-4);

\draw[very thick] (\aa-2*1.7320508075688772,\ba-2)--(\aa-2*1.7320508075688772,\ba+2)--(\aa,\ba+4);

\pgfmathsetmacro{\aa}{0};
\pgfmathsetmacro{\ba}{8};

\draw (\aa+2*1.7320508075688772,\ba+2)-- (\aa+2*1.7320508075688772,\ba-2)--(\aa,\ba-4)--(\aa-2*1.7320508075688772,\ba-2)--(\aa-2*1.7320508075688772,\ba+2)--(\aa,\ba+4)--cycle;

\draw (\aa-2*1.7320508075688772,\ba-2)--(\aa+2*1.7320508075688772,\ba+2);
\draw (\aa-2*1.7320508075688772,\ba+2)--(\aa+2*1.7320508075688772,\ba-2);
\draw (\aa,\ba+4)--(\aa,\ba-4);

\draw[very thick] (\aa+2*1.7320508075688772,\ba+2)--(\aa,\ba+4)--(\aa-2*1.7320508075688772,\ba+2);

\draw[fill=black] (4*1.7320508075688772,0) circle (0.2);
\draw[fill=black] (4*1.7320508075688772,-2) circle (0.2);
\draw[fill=black] (4*1.7320508075688772,-4) circle (0.2);

\draw (4*1.7320508075688772-0.7,-1.2) node {{\tiny{$L_2$}}};
\draw (4*1.7320508075688772-0.7,-2.8) node {{\tiny{$L_1$}}};

\draw[densely dotted] (0,12)--(6*1.7320508075688772,6)--(6*1.7320508075688772,-6)--(0,-12)--(-6*1.7320508075688772,-6)--(-6*1.7320508075688772,6)--cycle; 

\draw[densely dotted] (0,4)--(2*1.7320508075688772,2)--(2*1.7320508075688772,-2)--(0,-4)--(-2*1.7320508075688772,-2)--(-2*1.7320508075688772,2)--cycle;

\draw[densely dotted] (5*1.7320508075688772,1)--(5*1.7320508075688772,-1);
\draw[densely dotted] (3*1.7320508075688772,1)--(3*1.7320508075688772,-1);

\draw[densely dotted] (-5*1.7320508075688772,1)--(-5*1.7320508075688772,-1);
\draw[densely dotted] (-3*1.7320508075688772,1)--(-3*1.7320508075688772,-1);

\draw[densely dotted] (1.7320508075688772,-5)--(2*1.7320508075688772,-4);
\draw[densely dotted] (2*1.7320508075688772,-8)--(3*1.7320508075688772,-7);

\draw[densely dotted] (-1.7320508075688772,-5)--(-2*1.7320508075688772,-4);
\draw[densely dotted] (-2*1.7320508075688772,-8)--(-3*1.7320508075688772,-7);

\draw[densely dotted] (1.7320508075688772,5)--(2*1.7320508075688772,4);
\draw[densely dotted] (2*1.7320508075688772,8)--(3*1.7320508075688772,7);

\draw[densely dotted] (-1.7320508075688772,5)--(-2*1.7320508075688772,4);
\draw[densely dotted] (-2*1.7320508075688772,8)--(-3*1.7320508075688772,7);

\end{tikzpicture}
\caption{$X^{(n)}$ hits $\gamma$ before $\tau$}\label{fig_hit}
\end{figure}
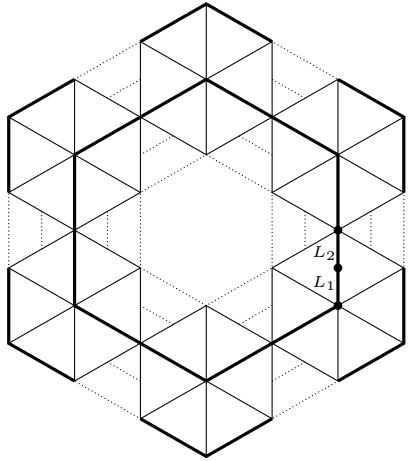
\end{myprop}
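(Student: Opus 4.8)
The plan is to apply the knight move technique: produce a single, $\gamma$-independent sequence of at most $41$ of the local moves from Lemma \ref{lem_corner}, Lemma \ref{lem_knight1} and Lemma \ref{lem_knight2}, and chain them by the strong Markov property. Each such move pushes the walk from the distinguished corner $L_0$ of a fixed cell configuration into the adjacent piece $L_1$, with probability at least $1/8$, $1/12$, $1/8$ respectively, while keeping it inside the associated region $B\subseteq\mathrm{int}(K_w)$; bounding every step below by $1/12$ and using $41$ steps gives $(1/12)^{41}=1/12^{41}$. Thus everything reduces to arranging the moves so that their joint success forces $X^{(k)}$ to meet every admissible $\gamma$.

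First I would normalize. Using the rotational symmetry of $K_w$ and the reflection symmetries exploited in the proof of the move lemmas, it suffices to treat $x,y$ in the fixed copy $K_{w53}$ drawn in Figure \ref{fig_hit}, with $L_1,L_2$ as indicated. A short computation with the coding shows $K_{w53}\cap\dd K_w=\emptyset$, so $y$ lies in $\mathrm{int}(K_w)$ and $\gamma$ genuinely has to leave $K_{w53}$ in order to reach $\dd K_w$. The subtlety, built into the choice of the index $53$, is the gluing rule: since $5$ is odd, it applies with $j=0$ to give $w53v\sim w03v$ for $v\in\{0,5\}^\infty$, so $K_{w53}$ is identified with $K_{w03}$ along two of its sides. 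This is the unusual connectedness that makes the cells surrounding $K_{w53}$ straddle the two main sub-cells $K_{w5}$ and $K_{w0}$.

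The core step is to route the itinerary once around $K_{w53}$. By Lemma \ref{lem_metric_basic} each cell has at most three neighbours, so the cells adjacent to $K_{w53}$ form a thin ring (partly inside $K_{w5}$, partly inside $K_{w0}$ through the gluing), and I would chain corner and knight moves so that, beginning at $x$ and after an initial move into the ring, the walk is driven successively around this ring and back, staying in $\mathrm{int}(K_w)$ throughout. On the event that every move lands in its designated $L_1$ -- of probability at least $(1/12)^{41}$ -- the walk crosses the full ring surrounding $K_{w53}$. Since $\gamma$ runs from $y\in K_{w53}$ out to $\dd K_w$, it must also cross this ring; because the ring is one cell thick and the relevant cell-adjacency graph is planar of bounded degree (Lemma \ref{lem_metric_basic}), the two crossings are forced to share a vertex, i.e.\ $X^{(k)}$ hits $\gamma$, and it does so before $\tau_{\mathrm{int}(K_w)}$ because the whole ring lies in $\mathrm{int}(K_w)$.

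The hard part is the combinatorial design of this ring together with the proof that the two crossings genuinely meet at a vertex. The itinerary must be fixed in advance, so it has to encircle $K_{w53}$ rather than chase $\gamma$; verifying that this can be done with at most $41$ admissible moves, each matching the precise hypotheses of one of the three move lemmas, is delicate precisely because the gluing $w53v\sim w03v$ forces the ring to pass from $K_{w5}$ into $K_{w0}$ and back without leaking to $\dd K_w$. Making the shared-vertex conclusion rigorous -- rather than merely a shared cell -- at the finer scale $V_k$ is the point that carries the real weight, and is the reason the precise list of moves, and hence the exponent $41$, has to be produced explicitly.
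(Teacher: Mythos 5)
Your argument is essentially the paper's: the author likewise sends the walk a.s.\ onto the ring surrounding $K_{w53}$ (the inner thick hexagon of Figure \ref{fig_hit}), then chains a fixed, $\gamma$-independent itinerary of at most $41$ moves ($25$ corner moves, $7$ of each knight move, plus up to $2$ extra depending on whether the walk lands on $L_1$ or $L_2$) to trace a closed curve in $\mathrm{int}(K_w)$ encircling $K_{w53}$, so that any path $\gamma$ from $y$ to $\dd K_w$ must share a vertex with the trajectory by planarity, yielding $(1/12)^{41}$. Like you, the paper does not write out the explicit move list but only asserts the counts with reference to the figure, so your proposal is at the same level of completeness and takes the same route.
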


\begin{proof}
Starting from $x\in K_{w53}\cap V_k$, $X^{(k)}$ hits the inner thick hexagon in Figure \ref{fig_hit} almost surely. We only need to construct a \emph{closed} curve starting from the inner thick hexagon and surrounding the inner thick hexagon.

By symmetry, we only need to consider the cases $x\in L_1\cap V_k$ and $x\in L_2\cap V_k$. If $x\in L_1\cap V_k$, then using 25 times corner moves, 7 times knight move \Rmnum{1} and 7 times knight move \Rmnum{2}, we obtain a closed curve surrounding the inner thick hexagon. If $x\in L_2\cap V_k$, then using one more time knight move \Rmnum{2} and one more time corner move, we return to the case $x\in L_1\cap V_k$. Therefore, using at most 41 times moves, we obtain a closed curve surrounding the inner thick hexagon.

Combining Lemma \ref{lem_corner}, Lemma \ref{lem_knight1} and Lemma \ref{lem_knight2}, we obtain the desired result.
\end{proof}

\begin{proof}[Proof of Theorem \ref{thm_Harnack}]
By Proposition \ref{prop_cellandball}, we only need to prove the following result.

There exists some positive constant $C$ such that for all $n\ge0$, for all $w\in W_n$, for all $k\ge n$, for all non-negative harmonic function $u$ in $V_k\cap\mathrm{int}(K_w)$, we have
$$\max_{V_k\cap K_{w53}}u\le C\min_{V_k\cap K_{w53}}u.$$

For all subset $A$ of $\dd K_w$, denote
$$h_k(x,A)=\bbP_x\left[X^{(k)}_{\tau_{\mathrm{int}(K_w)}}\in A\right].$$
We only need to show that there exists some universal positive constant $\delta$ such that
$$h_k(x,A)\ge\delta h_k(y,A)\text{ for all }x,y\in V_k\cap K_{w53}.$$

Indeed, let $M_l=h_k(X^{(k)}_{l\wedge\tau_{\mathrm{int}(K_w)}},A)$, then $M_l$ is a martingale.

For all $\eta\in(0,1)$, let
$$T=\inf\myset{l\ge0:M_l<\eta h_k(y,A)}\wedge\tau_{\mathrm{int}(K_w)}.$$
Then
\begin{align*}
h_k(y,A)&=\bbE_yh_k(X^{(k)}_T,A)=\bbE_y\left[h_k(X^{(k)}_T,A)1_{T=\tau_{\mathrm{int}(K_w)}}\right]+\bbE_y\left[h_k(X^{(k)}_T,A)1_{T<\tau_{\mathrm{int}(K_w)}}\right]\\
&\le\bbP_y\left[T=\tau_{\mathrm{int}(K_w)}\right]+\eta h_k(y,A)\bbP_y\left[T<\tau_{\mathrm{int}(K_w)}\right]\\
&=1-\bbP_y\left[T<\tau_{\mathrm{int}(K_w)}\right]+\eta h_k(y,A)\bbP_y\left[T<\tau_{\mathrm{int}(K_w)}\right],
\end{align*}
hence
$$\bbP_y\left[T<\tau_{\mathrm{int}(K_w)}\right]\le\frac{1-h_k(y,A)}{1-\eta h_k(y,A)}<1,$$
hence $\bbP_y\left[T=\tau_{\mathrm{int}(K_w)}\right]>0$, hence there exists some path $\gamma=\myset{\gamma(0),\ldots,\gamma(l_0)}$ from $y$ to $\dd K_w$ such that
$$h_k(\gamma(l),A)\ge\eta h_k(y,A)\text{ for all }l=0,\ldots,l_0.$$
Let
$$S=\inf\myset{l\ge0:X^{(k)}_l\in\gamma},$$
then by Proposition \ref{prop_hit}, we have $\bbP_x\left[S<\tau_{\mathrm{int}(K_w)}\right]\ge12^{-41}$, hence
\begin{align*}
h_k(x,A)&=\bbE_xh_k(X^{(k)}_{S\wedge\tau_{\mathrm{int}(K_w)}},A)\ge\bbE_x\left[h_k(X^{(k)}_{S\wedge\tau_{\mathrm{int}(K_w)}},A)1_{S<\tau_{\mathrm{int}(K_w)}}\right]\\
&\ge\eta h_k(y,A)\bbP_x\left[S<\tau_{\mathrm{int}(K_w)}\right]\ge\frac{1}{12^{41}}\eta h_k(y,A).
\end{align*}
Since $\eta\in(0,1)$ is arbitrary, we have $h_k(x,A)\ge12^{-41}h_k(y,A)$.
\end{proof}

\bibliographystyle{plain}
%\bibliography{/home/ymeng/myref}

\def\cprime{$'$}

\end{document}